\newtheorem{theorem}{Theorem}[section]
\newtheorem{lemma}[theorem]{Lemma}
\newtheorem{corollary}[theorem]{Corollary}
\newtheorem{proposition}[theorem]{Proposition}
\theoremstyle{definition}
\newtheorem{definition}[theorem]{Definition}
\newtheorem{example}[theorem]{Example}
\theoremstyle{remark}
\newcommand{\N}{\mathbb{N}}
\newcommand{\Z}{\mathbb{Z}}
\newcommand{\cR}{\mathcal{R}}
\newcommand{\ty}{\widetilde{y}}
\newcommand{\cS}{\mathcal{S}}
\newcommand{\cU}{\mathcal{U}}
\newcommand{\cF}{\mathcal{F}}
\newcommand{\supp}{\mathrm{supp}}
\newcommand{\cl}{\mathrm{cl}}
\numberwithin{equation}{section}
\newcommand\black[1]{\textcolor{black}{#1}}
\begin{document}

\title{On the sum of chemical reactions}
\hfill\break
\author[Linard Hoessly]{Linard Hoessly}

\address{Department of Mathematical Sciences, University of Copenhagen, Denmark}
\email{linard.hoessly@hotmail.com}
\author[Carsten Wiuf]{Carsten Wiuf}

\address{Department of Mathematical Sciences, University of Copenhagen, Denmark}
\email{wiuf@math.ku.dk}

\author[Panqiu Xia]{Panqiu Xia}

\address{Department of Mathematical Sciences, University of Copenhagen, Denmark}
\email{px@math.ku.dk}

\date{}
\subjclass[2010]{}
\keywords{reaction network, reduction, Markov chain, graph}

\begin{abstract}
It is standard in chemistry to represent a sequence of reactions by a single overall reaction, often called a complex reaction in contrast to an elementary reaction. Photosynthesis $6 \text{CO}_2+6 \text{H}_2\text{O} \ce{->} \ \text{C}_6\text{H}_{12}\text{O}_6$ $+\ 6 \text{O}_2$ is an example of such complex reaction. We introduce a mathematical operation that corresponds to summing two chemical reactions. Specifically, we define an associative and non-communicative operation on the product space $\N_0^n\times \N_0^n$ (representing the reactant and the product of a chemical reaction, respectively). The operation models the overall effect of two reactions happening in succession, one after the other. We study the algebraic properties of the operation and apply the results to stochastic reaction networks, in particular to reachability of states, and to reduction of reaction networks.
\end{abstract}

\maketitle


\section{Introduction}

Systems of chemical reactions are commonly modeled by reaction networks (RNs) \cite{gardiner,Feinberg}. RNs provide a comprehensive mathematical framework for modelling systems of interacting species that is not only used in chemistry and biophysics, but also in mathematical genetics \cite{ewens2004}, epidemiology \cite{epid_CRN}, cellular and systems biology \cite{wilkinson}, and sociology \cite{sociology}. Notable examples include the  Lotka-Volterra predator-prey system \cite{murray}, and the SIR model \cite{AndersonMay}.

If a series of reactions occur one by one, it is natural to ask for the  overall effect of the reactions, that is, the {\it sum} (in some sense) of the reactions. In fact, it is standard in chemistry  to summarize reactions into a single overall or {\it complex}  reaction, in contrast to elementary  reactions. As an example, photosynthesis consists of a  sequence of reactions, summarized into the complex reaction
$$6\ \text{CO}_2+6\ \text{H}_2\text{O}\ \ce{->} \ \text{C}_6\text{H}_{12}\text{O}_6+6\ \text{O}_2$$
\cite{photobiol}.
Graphical treatment of such sequences of reactions, that is of complex  reactions, have a long history in the chemical literature, see e.g. \cite{christiansen,temkin,temkinBook,sakamoto}. Here, we provide the  mathematical framework for {\it adding} such sequences of reactions. 

As an example, consider an RN  describing single gene expression \cite{Thattai8614},
$$ 0 \ce{<=>} R,\quad P\ce{->} 0,\quad R\ce{->} R+P,$$
where $R$ denotes an mRNA molecule and $P$ a protein. The mRNA is freely produced from a gene (the reaction $0\ce{->}R$), the protein is translated from the mRNA, and both protein and mRNA are degraded. Modelled as a {\it discrete  system}, the state space is  $\N_0^2$, pairs of integers repesenting the number of $R$ and $P$ molecules, respectively. Jumps between states are given by reaction vectors, for example, a {\it direct} jump from $(k,\ell)\in\N_0^2 $ to $(k,\ell+1)$ is possible by means of the reaction $R\ce{->} R+P$, if the number $k$ of $R$ molecules is $\ge 1$.
If $k=0$, then the sequence of reactions $0 \ce{->} R$, $R\ce{->} R+P$,  $R\ce{->}0$
 will take the system from the state $(0,\ell)$ to the state $(0,\ell+1)$. In that case, one might describe the overall effect (the {\it sum}) of the reaction sequence as $0\ce{->}P$. As the $R$ molecule is created in the first reaction and degraded in the third, it cancels in the sum. Using similar arguments, one can conclude that the set of reachable states from any state $(k,\ell)\in\N_0^2$ is all of $\N_0^2$.
 
When  the number of molecules of each species (here $R,P$) is low (as is often the case if the system is embedded into a cellular environment), it is appropriate to consider the system as a  discrete stochastic system
in $\N_0^n$. If so, it is standard to model the changes in molecule counts by a continuous-time Markov chain \cite{approx_kurtz,anderson1}. For example, with stochastic mass-action kinetics, the  propensities for the reactions to take place have the form
$$\lambda_{y\to y'}(x)=\kappa_{y\to y'}\frac{x!}{(x-y)!}\mathbbm{1}_{\{z\colon z\geq y\}}(x),$$
where $\kappa_{y\to y'}$ is a positive rate constant and $z!:=\prod_{i=1}^nz_i!$ for $z\in \N^n_0$.  A first step in the analysis of a stochastic dynamical RN is to understand the structure of the reachable sets and the irreducible classes; that is, to understand whether the system is confined to subspaces of $\N_0^n$, is absorbed in certain states, etc, depending on the initial state of the system.

In the following, we examine a binary sum operation on $\N_0^n\times\N_0^n$, that describes the addition of two chemical reactions, as illustrated in the single gene expression RN above. We study the operation's algebraic properties and its applications. In terms of applications, we exhibit connections to discrete RNs and reachability properties, and  to  reductions of discrete RNs. Common to these applications is the idea of reactions happening in succession, one after the other. 

Reactions  often occur at different time-scales \cite{KK13}. This has led to various methods for {\it reduction} of  RNs, where {\it fast} reactions and/or species are eliminated (in a precise mathematical sense). These methods are generally not qualitative (or graphical) {\it per se}, but quantitative,  and depend on whether the dynamics of the RN is stochastic \cite{KK13,CW16,gardiner} or deterministic \cite{BOWEN_1,feliu2019quasisteady}. 
 If the reactions in a sequence occur at a fast  rate (that is, with high intensity), it is natural to assume no other reactions take place before the last reaction of the sequence has occured.  Rather than describing the entire sequence of reactions, one might summarize the sequence by a single complex reaction, the overall effect. In a sense, this complex reaction is obtained by {\it contraction}. We define contraction of reactions through the defined sum operation and subsequently define reduced RNs. These constructs are essentially graphical in nature. We show that they relate to stochastic approaches for reduction of RNs, in particular to reduction by  elimination of so-called {\it intermediate} and {\it non-interacting} species \cite{CW16,HW2021}. 

Furthermore, we study graphical properties of  the state space of discrete RNs concerning the operation we introduce. We show that reachability can be expressed via the sum operation, and in particular that the closure of the sum operation determines reachability.  

In Section \ref{sec.agb}, we define the sum of two reactions and study the properties of the operation. In Section \ref{sec.appl}, we specialize to reaction networks and reachability properties. Finally, in Sections \ref{sec:reduction} and \ref{sec:red_reach}, we study reductions of RNs. In the latter section, we draw on Section \ref{sec.agb} and study conditions that ensure that the reduction leads to reversible (or weakly reversible, or essential) RNs. 
\subsection*{Acknowledgements}
The work presented in this article is supported by Novo Nordisk Foundation,
grant NNF19OC0058354. LH acknowledges funding from the Swiss National Science Foundations Early Postdoc.Mobility
grant (P2FRP2\_188023).

\section{Algebra on \texorpdfstring{$\N_0^n\times \N_0^n$}{}}\label{sec.agb}

Denote by $\Z$ the set of integers, and by $\N_0$ the set of  non-negative integers. Let $n$ be a positive integer. For  $x=(x^1,\dots,x^n),$ and $y=(y^1,\dots, y^n)$
in $\Z^n$, we write $x\leq y$ if $x^i\leq y^i$ for  $i=1,\dots,n$, and $x<y$ if $x\leq y$ and $x\neq y$. We also use the notation $x\ll y$ if $x^i<y^i$ for $i=1,\ldots,n$.  Furthermore, we let $x\vee y=(x^1\vee y^1,\dots, x^n\vee y^n)=(\max\{x^1,y^1\},\dots,\max\{x^n,y^n\})$ be the componentwise maximum, and let $x\wedge y=(x^1\wedge y^1,\dots, x^n\wedge y^n)=(\min\{x^1,y^1\},\dots,\min\{x^n,y^n\})$ be the componentwise minimum.

\begin{definition}\label{add}
Let $r_1=(y_1,y_1'), r_2=(y_2,y_2')\in \N_0^n\times \N_0^n$. Then $r_1\oplus r_2=(y,y')$ is the  element in $\N_0^n\times \N_0^n$ given by 
$y=y_1+0\vee(y_2-y_1')$
and $y'=y_2'+0 \vee(y_1'-y_2).$
\end{definition}

\begin{proposition}\label{pro}
$(\N_0^n\times \N_0^n,\oplus)$ forms a non-commutative monoid with identity $(0,0)$.
\end{proposition}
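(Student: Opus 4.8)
The plan is to verify the monoid axioms one at a time --- closure, two-sided identity, associativity --- and to exhibit a witness for non-commutativity, with associativity being the only substantive point. Closure is immediate: in $r_1\oplus r_2=(y,y')$ each coordinate is the sum of a vector in $\N_0^n$ (namely $y_1$, resp.\ $y_2'$) and a componentwise nonnegative term ($0\vee(y_2-y_1')$, resp.\ $0\vee(y_1'-y_2)$), so $(y,y')\in\N_0^n\times\N_0^n$. For the identity, take $r_2=(0,0)$: then $0\vee(y_2-y_1')=0\vee(-y_1')=0$ since $y_1'\ge 0$, while $0\vee(y_1'-y_2)=0\vee y_1'=y_1'$, giving $r_1\oplus(0,0)=(y_1,y_1')=r_1$; the symmetric computation gives $(0,0)\oplus r_2=r_2$.

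The observation that organizes the rest is that $\oplus$ acts \emph{coordinatewise}: the $i$-th entry of $y$ depends only on $y_1^i,y_1'^i,y_2^i$, and similarly for $y'$. Hence it suffices to prove associativity in the case $n=1$, i.e.\ on $(\N_0\times\N_0,\oplus)$, and then apply the one-dimensional result in each coordinate. Writing $r_i=(p_i,q_i)$ and abbreviating $(v)^+:=0\vee v=\max(0,v)$, associativity reduces to two scalar identities obtained by expanding $(r_1\oplus r_2)\oplus r_3$ and $r_1\oplus(r_2\oplus r_3)$; the first-coordinate identity is
\[ (p_2-q_1)^+ + \big(p_3-q_2-(q_1-p_2)^+\big)^+ = \big(p_2+(p_3-q_2)^+-q_1\big)^+, \]
and the second coordinate is its mirror image. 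I expect this nested positive-part identity to be the main obstacle, since a naive expansion branches into several sign cases.

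I would dispatch it by a conceptual invariant rather than brute-force casework. Reading $(p,q)$ as the reaction ``consume $p$, then produce $q$'', the reactant of the composite of $r_1,\dots,r_k$ performed in succession equals the total amount that must be supplied from outside, namely the negative running minimum of the stock trajectory,
\[ \max_{1\le j\le k}\Big(\sum_{i\le j}p_i-\sum_{i<j}q_i\Big), \]
and the product equals this quantity plus the total net change $\sum_i(q_i-p_i)$ (using that the net map $(p,q)\mapsto q-p$ is additive under $\oplus$). Both expressions are manifestly independent of how the reactions are bracketed, so once I check by a short induction that the recursively defined $\oplus$ reproduces them, associativity follows for $n=1$, hence for all $n$. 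Equivalently --- and this is the slick route I would flag --- the map $(p,q)\mapsto b^p a^q$ identifies $(\N_0\times\N_0,\oplus)$ with the bicyclic monoid $\langle a,b\mid ab=1\rangle$, whose multiplication $b^m a^n\cdot b^p a^q=b^{m+(p-n)^+}a^{q+(n-p)^+}$ coincides verbatim with $\oplus$, so associativity is inherited for free. Finally, non-commutativity is witnessed by $r_1=(0,1)$ and $r_2=(1,0)$, for which a direct computation gives $r_1\oplus r_2=(0,0)\neq(1,1)=r_2\oplus r_1$.
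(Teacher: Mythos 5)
Your proof is correct, and its treatment of the only substantive axiom---associativity---takes a genuinely different route from the paper's. The paper makes the same coordinatewise reduction to $n=1$, but then simply expands $(r_1\oplus r_2)\oplus r_3$ and $r_1\oplus(r_2\oplus r_3)$ and verifies equality by brute force in three sign cases (your non-commutativity witness is the special case $y_1=0$, $y_2=1$ of the paper's parametrized family $(y_1,y_2)\oplus(y_2,y_1)=(y_1,y_1)$). Your two alternatives both avoid that casework. The stock-trajectory invariant (reactant of a composite $=$ worst running deficit $\max_{j}\bigl(\sum_{i\le j}p_i-\sum_{i<j}q_i\bigr)$, product $=$ that plus the additive net change) in fact anticipates material the paper only develops afterwards: Proposition \ref{unique_char} and Corollary \ref{cor:m} are precisely the statement that any bracketed sum equals your formula, so this route proves more than associativity and handles all $m$ at once. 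To make it airtight, note that left-to-right evaluation alone does not suffice (associativity compares different bracketings); you need the merge law that the invariant of a concatenation $AB$ equals $(P_A,P_A+N_A)\oplus(P_B,P_B+N_B)$, which is a one-line consequence of $a+0\vee(b-a)=a\vee b$, and then structural induction over bracketings finishes it. The bicyclic-monoid identification is also correct---$(\N_0^n\times\N_0^n,\oplus)$ is the $n$-fold direct power of the bicyclic monoid, and your multiplication formula matches $\oplus$ verbatim---but the ``for free'' inheritance of associativity silently uses that the normal forms $b^ma^n$ are pairwise distinct in $\langle a,b\mid ab=1\rangle$, i.e.\ that $(p,q)\mapsto b^pa^q$ is injective; this is standard but not trivial, and without it the transport of associativity would be circular. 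In short: the paper's argument is longer but entirely self-contained, while your routes are shorter, more structural, and the first one yields the explicit closed form for arbitrary finite sums as a bonus.
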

\begin{proof}
It is straightforward to see that $\oplus$ is a stable operation on $\N_0^n\times \N_0^n$ with $(0,0)\oplus r=r\oplus (0,0)=r$ for all $r\in \N_0^n\times \N_0^n$. Let $y_1\neq y_2\in \N_0^n$. Then
\[
(y_1,y_2)\oplus (y_2,y_1)=(y_1,y_1)\neq (y_2,y_2)=(y_2,y_1)\oplus (y_1,y_2),
\]
hence $\oplus$ is non-commutative.
To prove associativity, we assume without loss of generality, that  $n=1$. For $n>1$, it follows by looking at each coordinate independently. Let $r_i=(y_i,y_i')$ for  $i=1,2,3$. Furthermore, let $(y,y')=(r_1\oplus r_2)\oplus r_3$ and $(\ty,\ty')=r_1\oplus (r_2\oplus r_3)$. Then,
\begin{align*}
y&=y_1+0\vee (y_2-y_1')+0\vee (y_3-y_2'-0\vee (y_1'-y_2)),\\
y'&=y_3'+0\vee (y_2'+0\vee (y_1'-y_2)-y_3),\\
\ty&=y_1+0\vee (y_2+0\vee(y_3-y_2')-y_1'),\\
\ty'&=y_3'+0\vee (y_2'-y_3)+0\vee(y_1'-y_2-0\vee (y_3-y_2')).
\end{align*}
By distinguishing the following three cases a) $y_2\geq y_1'$, b) $y_2< y_1'$ and $y_3\geq y_2'$,  and c) $y_2< y_1'$ and $y_3<y_2'$, it is easy to verify that $y=\ty$. A similar argument gives $y'=\ty'$. The proof  is complete.
\end{proof}

The sum operation reduces to standard  addition in $\N_0^n$ on the two axis, and it is the component-wise maximum (addition in max-plus algebras) on the diagonal. The proof of the next result is straightforward and omitted.
\begin{proposition}\label{properties_sum}
Let $r_1=(y_1,y_1'), r_2=(y_2,y_2')\in \N_0^n\times \N_0^n$. Then,

\begin{enumerate}[(i)]
\setlength\itemsep{1em}
    \item  If $y_1=y_2=0$,   then $ r_1\oplus r_2=(0,y_1'+y_2')$. 
    \item  If $y_1'=y_2'=0$,    then $ r_1\oplus r_2=(y_1+y_2,0)$.
    \item  If $y_1=y_1'$, $y_2=y_2'$, then $r_1\oplus r_2=(y_1\vee y_2,y_1'\vee y_2')$.
     \item \label{properties_sum4} $(y_1,y_2')\leq r_1\oplus r_2\leq (y_1+y_2,  y_1'+y_2')$.   Furthermore, the first equality holds if and only if $y_1'=y_2$ and the second equality holds if and only if $y_1'\wedge y_2=0$. 
\end{enumerate}
\end{proposition}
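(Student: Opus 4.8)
The plan is to reduce to the scalar case $n=1$ throughout, exactly as in the associativity argument of Proposition~\ref{pro}: since both coordinates of $r_1\oplus r_2$ are computed componentwise from $y_1,y_1',y_2,y_2'$, every claim either holds in each coordinate separately (items (i)--(iii) and the inequalities in (iv)) or is a conjunction of scalar conditions over all coordinates (the two equality characterizations in (iv)). With $n=1$, each assertion then follows from elementary properties of the map $a\mapsto 0\vee a=\max\{0,a\}$, so the whole proof is a sequence of short substitutions and one case distinction.

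For items (i)--(iii) I would substitute the hypotheses directly into $y=y_1+0\vee(y_2-y_1')$ and $y'=y_2'+0\vee(y_1'-y_2)$ and simplify. In (i) the hypothesis $y_1=y_2=0$ makes the first coordinate $0\vee(-y_1')=0$ (as $y_1'\geq 0$) and the second $y_2'+0\vee y_1'=y_1'+y_2'$; item (ii) is the mirror image. For (iii) one uses the identity $a+0\vee(b-a)=a\vee b$, valid for all reals, which turns both coordinates into componentwise maxima, the second one becoming $y_1'\vee y_2'$ after substituting $y_1=y_1'$ and $y_2=y_2'$.

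For (iv), the inequalities come from the two-sided bound $0\leq 0\vee(y_2-y_1')\leq y_2$, where the right-hand inequality uses $y_1'\geq 0$, together with the symmetric bound for the second coordinate. The equality characterizations are the only part needing genuine case analysis. The lower equality $(y,y')=(y_1,y_2')$ holds iff $0\vee(y_2-y_1')=0$ and $0\vee(y_1'-y_2)=0$, i.e. iff $y_2\leq y_1'$ and $y_1'\leq y_2$ in every coordinate, which is exactly $y_1'=y_2$. For the upper equality I expect the main (if minor) obstacle: one must show that in a single coordinate $0\vee(y_2-y_1')=y_2$ is equivalent to $y_1'\wedge y_2=0$ rather than to some inequality. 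This is settled by splitting into the cases $y_2=0$, $y_1'=0$, and $y_2,y_1'>0$, observing that the equality holds in the first two and fails in the third. The same computation applied to the second coordinate yields the identical condition, so the two halves of the upper equality coincide, and reading the scalar conclusion over all coordinates gives $y_1'\wedge y_2=0$.
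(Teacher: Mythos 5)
Your proof is correct: the paper omits this argument entirely (stating only that it is ``straightforward''), and your componentwise reduction to $n=1$ followed by direct substitution into $y=y_1+0\vee(y_2-y_1')$, $y'=y_2'+0\vee(y_1'-y_2)$ is precisely the routine verification the authors intend. In particular, your case analysis for the upper equality in (iv) --- showing that $0\vee(y_2-y_1')=y_2$ and $0\vee(y_1'-y_2)=y_1'$ each reduce, coordinatewise, to $y_1'\wedge y_2=0$ --- correctly handles the one point where the claim is not an immediate substitution.
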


The next statement characterizes the sum operation. 

\begin{proposition}\label{unique_char}
Let $r_1=(y_1,y_1'), r_2=(y_2,y_2')\in \N_0^n\times \N_0^n$ and  $r_1\oplus r_2=(y,y')$. Then
\begin{enumerate}[(i)]
\setlength\itemsep{1em}
\item\label{unique_char1} $y'-y= (y_1'-y_1)+(y_2'-y_2)$,
\item\label{unique_char2} For $x\in \N_0^n\colon x\geq y$ if and only if $x\ge y_1$ and $x+(y_1'-y_1)\geq y_2$,
    \item\label{unique_char3} For $x\in \N_0^n\colon x\geq y'$ if and only if $x\geq y_2'$ and $x+(y_2-y_2')\geq y_1'$.
\end{enumerate}
Oppositively, if \eqref{unique_char1} and \eqref{unique_char2}, or alternatively, if \eqref{unique_char1} and \eqref{unique_char3}, are fulfilled for some  operation $\oplus$ on $\N_0^n\times \N_0^n$, then it is the sum operation in Definition \ref{add}.
\end{proposition}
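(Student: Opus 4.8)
The plan is to handle the two implications separately: the forward direction is essentially a computation, while the converse rests on a uniqueness principle for order upper-sets in $\N_0^n$.

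For the forward direction I would unfold Definition~\ref{add} directly. The one identity worth isolating is the elementary fact that $0\vee u-0\vee(-u)=u$ for every $u\in\Z^n$ (checked componentwise, splitting into the signs of $u$). Taking $u=y_1'-y_2$ and subtracting the defining expressions for $y$ and $y'$ gives $y'-y=y_2'-y_1+(y_1'-y_2)=(y_1'-y_1)+(y_2'-y_2)$, which is \eqref{unique_char1}. For \eqref{unique_char2} I would rewrite $x\geq y$ as $x-y_1\geq 0\vee(y_2-y_1')$ and use that a vector dominates a componentwise maximum exactly when it dominates each of its arguments; this is precisely the pair of conditions $x\geq y_1$ and $x+(y_1'-y_1)\geq y_2$. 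The same step applied to $y'=y_2'+0\vee(y_1'-y_2)$ yields \eqref{unique_char3}.

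The converse carries the actual content. The principle I would invoke is that for any $c\in\N_0^n$ the set $\{x\in\N_0^n\colon x\geq c\}$ has $c$ as its unique minimal element, and hence determines $c$. Suppose now that some operation $\oplus$ produces $(y,y')$ satisfying \eqref{unique_char1} and \eqref{unique_char2}. The right-hand side of \eqref{unique_char2} is the intersection $\{x\colon x\geq y_1\}\cap\{x\colon x\geq y_1+y_2-y_1'\}$, which inside $\N_0^n$ equals $\{x\colon x\geq y_1\vee(y_1+y_2-y_1')\}$. The key algebraic observation is that $y_1\vee(y_1+y_2-y_1')=y_1+0\vee(y_2-y_1')$, and that this vector lies in $\N_0^n$ since $y_1\geq 0$, so the upper set is genuine with a minimum in $\N_0^n$. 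Thus \eqref{unique_char2} asserts that $\{x\colon x\geq y\}$ coincides with an explicit upper set whose minimum is exactly the value of $y$ prescribed by Definition~\ref{add}; by the uniqueness principle the $y$-coordinate of $\oplus$ must agree with that of the sum operation.

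It then remains only to fix $y'$. Since $y$ is now determined, \eqref{unique_char1} forces $y'=y+(y_1'-y_1)+(y_2'-y_2)$, which is the value given by Definition~\ref{add} (the sum operation itself satisfies \eqref{unique_char1} by the forward direction). This settles the hypothesis \eqref{unique_char1}+\eqref{unique_char2}; for \eqref{unique_char1}+\eqref{unique_char3} I would run the symmetric argument, using \eqref{unique_char3} to identify $\{x\colon x\geq y'\}$ with the upper set of minimum $y_2'\vee(y_2'+y_1'-y_2)=y_2'+0\vee(y_1'-y_2)$, thereby fixing $y'$, and then recovering $y$ from \eqref{unique_char1}. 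I expect the main obstacle to be conceptual rather than computational: the crux is recognizing that the two linear inequalities in \eqref{unique_char2} (respectively \eqref{unique_char3}) encode the single dominance condition $x\geq c$ with $c$ equal to the defining expression in Definition~\ref{add}, together with the small check that $c\in\N_0^n$ so that the upper-set uniqueness genuinely applies.
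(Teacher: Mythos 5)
Your proof is correct and follows essentially the same route as the paper: the forward direction by unfolding Definition~\ref{add} and rewriting $y=y_1\vee(y_1+y_2-y_1')$, and the converse by noting that condition \eqref{unique_char2} identifies $\{x\colon x\geq y\}$ with an upper set whose unique minimal element is $y_1+0\vee(y_2-y_1')$, after which \eqref{unique_char1} fixes $y'$. The paper leaves the upper-set uniqueness principle and the membership $y_1+0\vee(y_2-y_1')\in\N_0^n$ implicit, so your version is simply a more explicit rendering of the same argument.
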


\begin{proof}
\eqref{unique_char1} The claim  follows from $y'-y=y_2'+0\vee (y_1'-y_2)-y_1-0\vee (y_2-y_1'),$
as $0\vee (y_1'-y_2)-0\vee (y_2-y_1')=y_1'-y_2.$
\eqref{unique_char2} It is a direct consequence of $y=y_1+0\vee (y_2-y_1')=y_1\vee (y_1+y_2-y_1')$. \eqref{unique_char3} follows similarly.

Oppositely, assume \eqref{unique_char1} and \eqref{unique_char2} are fulfilled for some operation $\oplus$. Then, for any $(y_1,y_1')\oplus (y_2,y_2')=(y,y')$, it holds that $x\geq y$ if and only if $x\geq y_1$ and $x+y_1'-y_1\geq y_2$, that is, $x\geq y_2-y_1'+y_1$. This implies that $y=y_1+0\vee (y_2-y_1')$. Combining this fact with \eqref{unique_char1}, we get $y'=y_2'+0\vee (y_1'-y_2)$. If \eqref{unique_char1} and \eqref{unique_char3} are fulfilled for some operation $\oplus$, the proof is similar. It completes the proof.
\end{proof}

We next introduce an equivalence relation on $\N_0^n\times \N_0^n$ under which the corresponding quotient set is a commutative group.
\begin{definition}
Let $r_1=(y_1,y_1'),r_2=(y_2,y_2')\in \N_0^n\times \N_0^n$. Then,  $r_1$ and $r_2$ are equivalent, denoted by $r_1\sim r_2$, if $y_1'-y_1=y_2'-y_2$.
\end{definition}

The following theorem follows by definition and Proposition \ref{unique_char}\eqref{unique_char1}. 

\begin{theorem}\label{cmgp}
$((\N_0^n\times \N_0^n)/\sim, \oplus)$ forms a commutative group.
\end{theorem}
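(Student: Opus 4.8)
The plan is to show that the quotient is isomorphic, as a monoid, to $(\Z^n,+)$, from which the group axioms and commutativity follow at once. Define $\phi\colon \N_0^n\times \N_0^n\to \Z^n$ by $\phi(y,y')=y'-y$. By definition, $r_1\sim r_2$ holds if and only if $\phi(r_1)=\phi(r_2)$, so the equivalence classes are precisely the fibers of $\phi$. The entire argument then rests on one identity: Proposition \ref{unique_char}\eqref{unique_char1} states that $\phi(r_1\oplus r_2)=\phi(r_1)+\phi(r_2)$, i.e. $\phi$ carries $\oplus$ to ordinary addition in $\Z^n$.

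First I would use this additivity to check that $\sim$ is a congruence for $\oplus$, which is what makes the induced operation on the quotient well defined. If $r_1\sim s_1$ and $r_2\sim s_2$, then $\phi(r_1\oplus r_2)=\phi(r_1)+\phi(r_2)=\phi(s_1)+\phi(s_2)=\phi(s_1\oplus s_2)$, so $r_1\oplus r_2\sim s_1\oplus s_2$. We already know from Proposition \ref{pro} that $(\N_0^n\times \N_0^n,\oplus)$ is a monoid with identity $(0,0)$; since $\oplus$ descends to the quotient, the quotient is a monoid with identity the class of $(0,0)$.

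Next I would verify that the induced map $\bar\phi\colon (\N_0^n\times \N_0^n)/\sim\ \to \Z^n$ is a bijection. It is injective by construction, because the classes are exactly the fibers of $\phi$. For surjectivity, given $v\in\Z^n$ write $v=(0\vee v)-(0\vee(-v))$ with both terms in $\N_0^n$; then the pair $(0\vee(-v),\,0\vee v)$ lies in $\N_0^n\times\N_0^n$ and satisfies $\phi(0\vee(-v),\,0\vee v)=v$. Since $\bar\phi$ is a bijection with $\bar\phi([r_1]\oplus[r_2])=\bar\phi([r_1])+\bar\phi([r_2])$ sending the identity class to $0$, it is a monoid isomorphism onto $(\Z^n,+)$.

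Finally, $(\Z^n,+)$ is a commutative group, so transporting its structure back through $\bar\phi$ shows the quotient is a commutative group. Concretely, the inverse of the class of $(y,y')$ is the class of $(y',y)$, since $\phi(y',y)=-(y'-y)$ places $(y,y')\oplus(y',y)$ in the identity class. The only step requiring genuine computation is the additivity $\phi(r_1\oplus r_2)=\phi(r_1)+\phi(r_2)$, and that is already supplied; everything else is routine transport of structure. I therefore expect no real obstacle here — the substantive content is the recognition that $y'-y$ is a complete invariant for $\sim$ together with the clean additivity of $\phi$, both of which are handed to us by Proposition \ref{unique_char}\eqref{unique_char1}.
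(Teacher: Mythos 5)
Your proposal is correct and takes essentially the same approach as the paper: the paper's proof is just the one-line observation that the theorem follows from the definition of $\sim$ together with Proposition \ref{unique_char}\eqref{unique_char1}, i.e.\ the additivity of $(y,y')\mapsto y'-y$ under $\oplus$, which is the sole substantive ingredient in your argument as well. Your explicit identification of the quotient with $(\Z^n,+)$ (congruence check, injectivity on fibers, surjectivity via $v=(0\vee v)-(0\vee(-v))$, transport of structure) simply spells out in full what the paper leaves implicit.
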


The next proposition shows that {\it subtraction} might be defined on $\N_0^n\times \N_0^n$ instead of the quotient space, in some situations.

\begin{proposition}\label{prop_elim}
Let $r_1=(y_1,y_1'),\widetilde{r}_1=(\ty_1,\ty_1'), r_2=(y_2,y_2')\in \N_0^n\times \N_0^n$. The following properties hold.
\begin{enumerate}[(i)]
\setlength\itemsep{1em}
\item \label{subt1} Suppose that $r_1\oplus r_2=\widetilde{r}_1\oplus r_2$ and $y_2\ll  y_1'$. Then, $r_1=\widetilde{r}_1$.

\item \label{subt2}Suppose that $r_2\oplus r_1=r_2\oplus \widetilde{r}_1$ and $y_1\ll  y_2'$. Then, $r_1=\widetilde{r}_1$.
\end{enumerate}
\end{proposition}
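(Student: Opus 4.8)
The plan is to read both parts as cancellation laws for $\oplus$: part \eqref{subt1} cancels the fixed factor $r_2$ on the \emph{right}, whereas part \eqref{subt2} cancels it on the \emph{left}. Since $\oplus$ acts coordinatewise, I work with $n=1$ throughout (exactly as in the proof of Proposition \ref{pro}) and recover general $n$ coordinate by coordinate. The common mechanism is that a strict inequality between a reactant and a product forces one of the two terms $0\vee(\cdot)$ in Definition \ref{add} into its strictly positive regime, where it is an affine, hence injective, function of the unknown coordinate, so that the unknown reaction can be read off. I also note, once and for all, that by Proposition \ref{unique_char}\eqref{unique_char1} equality of the two sums already forces the net relation $y_1'-y_1=\ty_1'-\ty_1$ in both parts.

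For part \eqref{subt1}, the hypothesis $y_2\ll y_1'$ makes $0\vee(y_2-y_1')=0$ and $0\vee(y_1'-y_2)=y_1'-y_2>0$, so
\[
r_1\oplus r_2=\bigl(y_1,\ y_2'+(y_1'-y_2)\bigr).
\]
Matching the second coordinate of $\widetilde r_1\oplus r_2=\bigl(\ty_1+0\vee(y_2-\ty_1'),\,y_2'+0\vee(\ty_1'-y_2)\bigr)$ gives $0\vee(\ty_1'-y_2)=y_1'-y_2>0$; strict positivity throws the maximum onto its second argument, whence $\ty_1'=y_1'$ and $y_2<\ty_1'$. Matching the first coordinate then yields $0\vee(y_2-\ty_1')=0$ and $\ty_1=y_1$, so $r_1=\widetilde r_1$. (Alternatively, $\ty_1'=y_1'$ follows from Proposition \ref{unique_char}\eqref{unique_char3} applied to both products, after which the net relation supplies $\ty_1=y_1$.)

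Part \eqref{subt2} is the mirror image of part \eqref{subt1} under the order-reversing involution $\sigma(y,y')=(y',y)$, which one verifies directly from Definition \ref{add} to satisfy $\sigma(a\oplus b)=\sigma(b)\oplus\sigma(a)$. Applying $\sigma$ turns the left-cancellation $r_2\oplus r_1=r_2\oplus\widetilde r_1$ into the right-cancellation $\sigma(r_1)\oplus\sigma(r_2)=\sigma(\widetilde r_1)\oplus\sigma(r_2)$ of part \eqref{subt1}, with reactants and products interchanged; the operative strict condition is thus the $\sigma$-image of $y_2\ll y_1'$, namely $y_2'\ll y_1$. Under it, $0\vee(y_1-y_2')=y_1-y_2'>0$ and $0\vee(y_2'-y_1)=0$ give $r_2\oplus r_1=\bigl(y_2+(y_1-y_2'),\,y_1'\bigr)$; matching coordinates forces $\ty_1=y_1$ (with $\ty_1>y_2'$) and then $\ty_1'=y_1'$, so $r_1=\widetilde r_1$. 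The step I expect to be the main obstacle is precisely fixing the correct direction of this strict inequality: it must be the reactant $y_1$ of the right factor that strictly dominates the product $y_2'$ of the left factor. Under the reversed inequality the first coordinate of $r_2\oplus r_1$ collapses to $y_2$, the computation recovers only the net relation $y_1'-y_1=\ty_1'-\ty_1$, and cancellation fails; for instance $r_2=(5,10)$, $r_1=(3,7)$, $\widetilde r_1=(0,4)$ satisfy $r_2\oplus r_1=r_2\oplus\widetilde r_1=(5,14)$ with $r_1\neq\widetilde r_1$, so the hypothesis must be read in the direction $y_2'\ll y_1$.
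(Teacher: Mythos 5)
Your part (i) is correct and is essentially the paper's own argument: equality of the second coordinates gives $0\vee(\ty_1'-y_2)=y_1'-y_2\gg 0$, which forces $\ty_1'=y_1'$, and the first coordinates then yield $\ty_1=y_1$.

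For part (ii) you have caught a genuine defect in the printed statement, not merely a subtlety of your own proof. The paper dismisses (ii) with ``the proof is similar,'' but the mirrored computation only goes through under $y_2'\ll y_1$ (product of the fixed left factor strictly dominated by the reactant of the unknown right factor); this is precisely the image of (i)'s hypothesis under the involution $r\mapsto r^{-1}$, whereas the printed hypothesis $y_1\ll y_2'$ is the reverse inequality and is insufficient. Your counterexample is valid: with $r_2=(5,10)$, $r_1=(3,7)$, $\widetilde{r}_1=(0,4)$ one computes $r_2\oplus r_1=r_2\oplus\widetilde{r}_1=(5,14)$ while $y_1=3\ll 10=y_2'$. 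Indeed, under $y_1\ll y_2'$ the first coordinate of $r_2\oplus r_1$ collapses to $y_2$, so equating the two sums retains only the net relation $y_1'-y_1=\ty_1'-\ty_1$, and any shift $\widetilde{r}_1=(y_1+t,\,y_1'+t)$ with $y_1+t\le y_2'$ (or the corresponding downward shift) breaks cancellation. Your reduction of the corrected (ii) to (i) via $\sigma(y,y')=(y',y)$ is legitimate: the anti-homomorphism property $\sigma(a\oplus b)=\sigma(b)\oplus\sigma(a)$ is exactly the paper's identity $\left(\oplus_{i=1}^m r_i\right)^{-1}=\oplus_{i=1}^{m}r_{m+1-i}^{-1}$ with $m=2$, and this route is arguably cleaner than redoing the coordinate computation, since it makes transparent why the strict inequality must point in the direction $y_2'\ll y_1$. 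The paper's remark following the proposition, that the conditions in (i) and (ii) ``cannot be weakened,'' is consistent with the authors having intended the dual condition; as printed, (ii) should be read with the hypothesis $y_2'\ll y_1$, exactly as you conclude.
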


\begin{proof}
We show property \eqref{subt1}.  The proof of property \eqref{subt2} is similar. If $y_2\ll y_1'$, then by definition and the assumption that  $r_1\oplus r_2=\widetilde{r}_1\oplus r_2$, we get
\[y_1=y_1+0\vee (y_2-y_1')=\ty_1+0\vee (y_2-\ty_1')\]
and
\[ y_2'+y_1'-y_2=y_2'+0\vee (y_1'-y_2)=y_2'+0\vee (\ty_1'-y_2).\]
From the second equation,  we have $0\ll y_1'-y_2=0\vee (\ty_1'-y_2)$. Hence, $y_1'-y_2=\ty_1'-y_2$, which implies $\ty_1'= y_1'$. As a result of the first equality, we have $y_1=\ty_1$. The proof  is complete.
\end{proof}

The condition $y_2\ll  y_1'$  in Proposition \ref{prop_elim}\eqref{subt1} (as well as that in (ii)) cannot be weakened, which can be seen by example.

Proposition \ref{pro} allows us to define the (non-commutative) summation of a finite sequence of elements in $\N_0^n\times \N_0^n$,
\[
\oplus_{i=1}^m r_i=r_1\oplus r_2\oplus \dots \oplus r_m.
\]
For any $r=(y,y')\in \N_0^n\times \N_0^n$, let $r^{-1}=(y',y)$ be the {\it inverse} of $r$. Then, 
$r\oplus r^{-1}=(y,y),$ and $r^{-1}\oplus r=(y',y')$.
The inverse is unique, and furthermore
\begin{align*}
\left(\oplus_{i=1}^m r_i\right)^{-1}=\oplus_{i=1}^{m}r_{m+1-i}^{-1}
\end{align*}
for $r_1,\ldots,r_m\in\N_0^n\times \N_0^n$, $m=1,2,\dots$.

\begin{corollary}\label{cor:m}
If $r_1=(y_1,y_1'),\dots, r_m=(y_m,y_m')\in \N_0^n\times \N_0^n$ and  $\oplus_{i=1}^mr_i=(y,y')$, then
\begin{itemize}
\setlength\itemsep{1em}
\item[(i)] $y'-y=\sum_{i=1}^my_i'-y_i$,
\item[(ii)] For $x\in \N_0^n\colon x\geq y$ if and only if  $x+\sum_{i=1}^k(y_i'-y_i)\geq y_{k+1}$ for  $k=0,1,\ldots,m-1$.
\end{itemize}
\end{corollary}

\begin{proof}
Let $r_{(m)}=\oplus_{k=1}^m r_k$. The corollary is then a consequence of Proposition \ref{unique_char} and  induction in $m$.
\end{proof}

A subset $A\subseteq\N_0^n\times \N_0^n$ is said to be {\it closed} (under $\oplus$) if for any $r_1,r_2\in A$, $r_1\oplus r_2\in A$ as well. 
Denote by $\cl(A)$ 
the {\it closure} of $A$, that is, the collection of all $r\in \N_0^n\times \N_0^n$ that can be represented as a {\it finite} sum of elements in $A$, including the empty sum  by convention, that is, $(0,0)\in \cl(A)$. Thus, $\cl(A)$ is the smallest closed set containing $A\cup\{(0,0)\}$, namely,  $\cl(A)$ is a subset of any closed set $A'\cup\{(0,0)\}$ with $A\subseteq A'$. 

We next introduce several notions related to  {\it reversibility}. The concepts to be introduced are analogous to concepts in  reaction network theory, cf. \cite{Feinberg,Cappelletti}. In particular, the term {\it essential} comes from Markov chain theory, but it is also used in  
reaction network theory \cite{Cappelletti}. It is also equivalent to  {\it recurrent}, defined in \cite{jmb-14-pauleve-craciun-koeppl} (see below), which is different from recurrent in Markov chain theory.

\begin{definition}\label{def_CRNs}
Let $A$ be a subset of $ \N_0^n\times \N_0^n$. We say  
\begin{enumerate}[(i)]
\setlength\itemsep{1em}
    \item \black{$r\in A$  is {\it reversible} in $A$ if $r^{-1}\in A$.} The set  $A$ is {\it reversible}, if  $r\in A$ implies $r^{-1}\in A$.

    \item \label{def_crns2} \black{ $r\in A$ is {\it weakly reversible} in $A$, if there exist a sequence of elements $r_1=(y_1, y_1'),\dots,r_m=(y_m, y_m')\in A$, such that $y_{k-1}'=y_k$ for $k=2,\dots,m$, and $\oplus_{i=1}^mr_i=r^{-1}$. The set  $A$ is  {\it weakly reversible}, if for any $r\in A$, $r$ is weakly-reversible in $A$.}

    \item \label{def_crns3} $A$ is  {\it essential}, if $\cl(A)$    is reversible.
\end{enumerate}  
\end{definition}
By Proposition \ref{properties_sum}\eqref{properties_sum4}, we have $\oplus_{i=1}^mr_i=(y_1,y_m')$ in Definition \ref{def_CRNs}\eqref{def_crns2}. Clearly the following implications hold by definition.

\begin{lemma}
Let $A$ be a subset of $ \N_0^n\times \N_0^n$. Then,
$$A \text{ is reversible}\implies A  \text{ is  weakly reversible}\implies A  \text{ is  essential.}$$
\end{lemma}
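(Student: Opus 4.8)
The plan is to prove the two implications separately, in each case unfolding the definitions and using the stated algebraic facts. The statement is essentially a chain of definitional inclusions, so the work consists in checking that each stronger condition supplies exactly the data required by the weaker one.

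For the first implication, suppose $A$ is reversible and take any $r=(y_1,y_1')\in A$. I need to exhibit a sequence $r_1,\dots,r_m\in A$ with matching reactants and products ($y_{k-1}'=y_k$) whose sum equals $r^{-1}$. The natural choice is the length-one sequence $m=1$ with $r_1=r^{-1}$: reversibility of $A$ gives $r^{-1}\in A$, the chaining condition $y_{k-1}'=y_k$ is vacuous for a single element, and trivially $\oplus_{i=1}^1 r_1=r^{-1}$. Hence every $r\in A$ is weakly reversible in $A$, so $A$ is weakly reversible. This step is immediate.

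For the second implication, suppose $A$ is weakly reversible; I must show $\cl(A)$ is reversible, i.e. that $s\in\cl(A)$ implies $s^{-1}\in\cl(A)$. First I would reduce to generators of the closure: any $s\in\cl(A)$ is a finite sum $s=\oplus_{i=1}^m r_i$ with each $r_i\in A$ (the empty sum $(0,0)$ is its own inverse, so that case is trivial). Using the identity $\left(\oplus_{i=1}^m r_i\right)^{-1}=\oplus_{i=1}^m r_{m+1-i}^{-1}$ recorded in the excerpt, it suffices to show $r_i^{-1}\in\cl(A)$ for each $i$, since $\cl(A)$ is closed under $\oplus$ and thus contains any finite sum of its members. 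Now for a single $r\in A$, weak reversibility of $A$ provides a sequence $r_1,\dots,r_m\in A$ with $\oplus_{i=1}^m r_i=r^{-1}$; by definition of the closure this finite sum lies in $\cl(A)$, so $r^{-1}\in\cl(A)$. Combining, $s^{-1}=\oplus_{i=1}^m (r_{m+1-i})^{-1}$ is a finite sum of elements of $\cl(A)$, hence lies in $\cl(A)$, establishing reversibility of $\cl(A)$ and therefore essentiality of $A$.

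The only place requiring genuine care — and the step I would flag as the main obstacle — is the bookkeeping in the second implication: keeping straight that weak reversibility yields $r^{-1}$ (not $r$) as a sum of $A$-elements, and then correctly assembling the inverse of a general closure element via the order-reversing inverse formula. Everything else is a direct appeal to closedness of $\cl(A)$ under $\oplus$ and to the definitions; no properties of the underlying operation beyond associativity, the inverse formula, and the definition of $\cl$ are needed.
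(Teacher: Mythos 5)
Your proof is correct, and it is essentially the argument the paper has in mind: the paper simply asserts that the implications ``hold by definition,'' and your writeup is the straightforward unfolding of those definitions (the length-one sequence $r^{-1}$ for the first implication; closedness of $\cl(A)$ under $\oplus$ together with the order-reversing inverse formula for the second). No gaps.
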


\section{RNs and reachability}\label{sec.appl}

In this section, we combine the algebra defined in Section \ref{sec.agb} with reaction network theory  and present some reachability results. By definition, an RN is a subset $\cR\subseteq \N_0^n\times \N_0^n$,
containing no elements $r$ equivalent to $(0,0)$. For convenience, we allow $\cR$ to be infinite, though this is not standard in the literature  \cite{Feinberg}. We use standard terminology for reaction networks, and refer to an element $r=(y,y')\in\cR$ as a  {\it reaction},  $y$ as the {\it reactant}  and $y'$  as the {\it product} of this reaction. The species of $y$ are {\it degraded} and those of $y'$ are {\it produced}.
 Furthermore, as is standard in the literature, we consider an RN as a graph, writing $y\ce{->} y'$ for $(y,y')\in\cR$, 
 and $y\ce{<=>} y'$ for $(y,y'),(y',y)\in\cR$.
 
  For  $i=1,\dots, n$, we denote by $S_i$ the $i$th unit vector in $\N_0^n$, such that $\cS=\{S_1,\dots, S_n\}$ forms a complete basis of $\N_0^n$. For  $y=(y^1,\dots, y^n)\in \N_0^n$, we thus have $y=\sum_{i=1}^n y^i S_i$.   We refer to $S_i$ as the $i$th {\it species}, and the component $y^i$  as the {\it stoichiometric coefficient} of the species $S_i$ in $y$.

\begin{example}\label{ex_2MM}
Consider a two-substrate mechanism \cite{bowden}, 
$$E+A\ce{<=>} EA,\quad EA+P\ce{->}EQ\ce{->}E+Q,$$
where $E$ is an enzyme catalysing the conversion of a substrate $A$ to another substrate $Q$ through a third intermediate substrate $P$. The molecules $EA$ and $EQ$ are referred to as transient (or intermediate) complexes.

Using the notation introduced above, let $S_1=(1,0,0,0,0,0)=E$, $S_2=(0,1,0,0,0,0)=A$, $S_3=(0,0,1,0,0,0)=EA$, $S_4=(0,0,0,1,0,0)=P$, $S_5=(0,0,0,0,1,0)=EQ$ and $S_6=(0,0,0,0,0,1)=Q$. Then, we might write the reactions as follows,
\begin{align*}
&\qquad \qquad (1,1,0,0,0,0)\ce{<=>} (0,0,1,0,0,0,0),\\
& (0,0,1,1,0,0,0)\ce{->} (0,0,0,0,1,0)\ce{->} (1,0,0,0,0,1).
\end{align*}
For example, the species $E$ has stoichiometric coefficient $1$ in $(1,1,0,0,0,0)=S+E$.
\end{example}

In the stochastic theory of  RNs with {\it finite} number of reactions, the molecule counts follow a continuous-time Markov process $\{X(t)\}_{t\ge 0}$ with state space $\N_0^n$. Jumps  occur according to the ``firing'' of reactions: The reaction $y\ce{->} y'\in\cR$ has transition intensity $\lambda_{y \to y'}(x)$ and when it occurs the process jumps from state $x$ to state $x+y'-y$, where $y'-y$ is the {\it net gain} of the reaction \cite{anderson1}. 
The Markov process satisfies the following equation:
\begin{equation}\label{eq:markov}
P(X(t+\Delta t)=x+\xi|X(t)=x)=\sum_{y\to y'\in \cR\colon  y'-y  =\xi}\lambda_{y\to y'}(x)\Delta t+ o(\Delta t),
\end{equation}
for $\xi\in\Z^n$ and some initial count $X(0)=x_0\in\N_0^n$. As $\cR$ is finite, then \eqref{eq:markov} defines the process $\{X(t)\}_{t\ge 0}$ (provided the chain does not explode).

Generally, the transition intensities $\lambda_{y \to y'}\colon\N_0^n\to [0,\infty)$, for $y\ce{->} y'\in\cR$, are assumed to satisfy the {\it compatibility} condition
\begin{equation}\label{eq:x}
\lambda_{y\to y'}(x)>0\quad \iff \quad x\geq y,
\end{equation}
or the weaker condition
\begin{equation}\label{eq:x2}
\lambda_{y\to y'}(x)>0\quad \implies  \quad x\geq y.
\end{equation}
These have  natural interpretations: A reaction $y\ce{->} y'$ can occur (if and) only if the molecule counts are larger than or equal to $y$. Below, we adhere to \eqref{eq:x} and note that similar  statements (one-way implications) to those we derive can be achieved assuming \eqref{eq:x2} only.

A reaction $y\ce{->} y'\in \cR$ is said to be {\it active} on a state $x\in\Z^n$ if $\lambda_{y \to y'}(x)>0$, and  an ordered sequence of reactions $y_1\ce{->} y_1',\ldots, y_m\ce{->} y_m'\in\cR$ is said to be {\it active} on $x$ if
\begin{equation}\label{eq:xx}
\lambda_{y_k\to y_k'}\!\left(x+\sum_{i=1}^{k-1}y_i'-y_i\right)>0, \quad  k=1,\ldots, m,
\end{equation}
that is, if the sequence of reactions can happen in succession, one after the other. After each step  the molecule count is updated. 
In particular, an ordered  sequence of reactions is active on $x$ if and only if there is a positive probability that the Markov chain performs this sequence of reactions in the given order.

Assume the compatibility condition \eqref{eq:x} holds. Then,  \eqref{eq:xx} is equivalent to  $x+\sum_{i=1}^{k-1}(y_i'-y_i)\ge y_k$ for $k=1,\ldots,m$.
According to Proposition \ref{unique_char} and Corollary \ref{cor:m}, this provides the following interpretation of the sum operation.

\begin{corollary}\label{coroactive}
An ordered sequence of reactions $y_1\ce{->} y_1',\ldots, y_m\ce{->} y_m'\in\cR$ is active  on a state $x\in \N_0^n$, if and only if $x\ge y$, where $(y, y') =\oplus_{i=1}^m (y_i\ce{->} y_i')$.
\end{corollary}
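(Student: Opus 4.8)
The plan is to reduce the statement directly to an equivalence already established earlier in the excerpt, so that the corollary becomes a matter of assembling two prior results rather than a fresh computation. The starting point is the definition of an active sequence in \eqref{eq:xx}, together with the compatibility condition \eqref{eq:x}. Applying \eqref{eq:x} to each factor $\lambda_{y_k\to y_k'}$ converts the positivity requirement $\lambda_{y_k\to y_k'}(x+\sum_{i=1}^{k-1}(y_i'-y_i))>0$ into the inequality $x+\sum_{i=1}^{k-1}(y_i'-y_i)\ge y_k$, holding for all $k=1,\ldots,m$. This is precisely the rewriting the text performs in the sentence immediately preceding the corollary, so the first step is simply to record that the activeness of the sequence on $x$ is equivalent to the system of inequalities $x+\sum_{i=1}^{k-1}(y_i'-y_i)\ge y_k$ for $k=1,\ldots,m$.

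The second step is to match this system of inequalities against Corollary \ref{cor:m}(ii). Writing $(y,y')=\oplus_{i=1}^m(y_i\to y_i')$, Corollary \ref{cor:m}(ii) states that for $x\in\N_0^n$, one has $x\ge y$ if and only if $x+\sum_{i=1}^k(y_i'-y_i)\ge y_{k+1}$ for $k=0,1,\ldots,m-1$. After a reindexing (replacing $k$ by $k-1$, so that the condition ranges over $k=1,\ldots,m$ and reads $x+\sum_{i=1}^{k-1}(y_i'-y_i)\ge y_k$), this is exactly the system of inequalities obtained in the first step. Hence the chain of equivalences closes: the sequence is active on $x$ if and only if the inequalities hold, and the inequalities hold if and only if $x\ge y$.

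I do not expect any genuine obstacle here, since the corollary is essentially a restatement of Corollary \ref{cor:m}(ii) under the dictionary provided by the compatibility condition. The only point requiring care is the bookkeeping of indices: one must make sure the empty-sum convention at $k=0$ (giving the plain condition $x\ge y_1$) is handled consistently, and that the reindexing between the ``$k=0,\ldots,m-1$'' form of Corollary \ref{cor:m}(ii) and the ``$k=1,\ldots,m$'' form of \eqref{eq:xx} is carried out correctly. It is also worth noting explicitly that we invoke the stronger compatibility condition \eqref{eq:x} rather than \eqref{eq:x2}, because we need the full biconditional $\lambda_{y_k\to y_k'}(z)>0\iff z\ge y_k$ to obtain an ``if and only if'' statement; under \eqref{eq:x2} alone one would recover only one direction, consistent with the remark in the text about one-way implications. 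With these indexing and hypothesis checks in place, the proof amounts to a two-line citation of the preceding results.
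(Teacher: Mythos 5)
Your proof is correct and follows exactly the route the paper takes: the compatibility condition \eqref{eq:x} turns the activeness condition \eqref{eq:xx} into the system of inequalities $x+\sum_{i=1}^{k-1}(y_i'-y_i)\ge y_k$, $k=1,\ldots,m$, and Corollary \ref{cor:m}(ii) (after the reindexing you describe) identifies that system with $x\ge y$. Your remarks on the empty-sum convention and on why \eqref{eq:x} rather than \eqref{eq:x2} is needed for the biconditional match the paper's own framing.
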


For  a stochastic RN, {\it reachability} to a state $x'\in \N_0^n$ or the set of reachable states 
 from an initial state $x\in\N_0^n$, is often a main interest  \cite{Mreach}.
A state $x$ {\it leads to} a state $x'$ via an RN $\cR$, or equivalently, $x'$ is {\it reachable} from $x$ if there is an active ordered sequence of  $m\geq 0$ reactions  $y_1\ce{->} y_1',\ldots,y_m\ce{->} y_m'\in\cR$ such that $x'=x+\sum_{i=1}^my_i'-y_i$.  As a  consequence of Proposition \ref{unique_char}  and Corollary \ref{cor:m}, we can reformulate reachability of elements in $\N_0^n$ via $\cR$  as follows.

\begin{lemma}\label{leads} 
Let $\cR$ be an RN. A state $x\in\N_0^n$ leads to $x'\in\N_0^n$ if and only if there is  $(y,y')\in\cl(\cR)$ with $x\geq y$ and $x'=x+y'-y$; equivalently $(x,x')\geq (y,y')$ and $(x,x')\sim (y,y')$.
\end{lemma}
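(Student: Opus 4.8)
The plan is to unfold the definition of ``leads to'' and then let the two previously established corollaries do all the work. Recall that $x$ leads to $x'$ means precisely that there is an active ordered sequence of reactions $y_1\ce{->} y_1',\ldots,y_m\ce{->} y_m'\in\cR$ (possibly empty, $m=0$) with $x'=x+\sum_{i=1}^m(y_i'-y_i)$. For any such sequence I would set $(y,y')=\oplus_{i=1}^m(y_i\ce{->} y_i')$; by the definition of the closure this is automatically an element of $\cl(\cR)$, with the empty sum contributing $(0,0)$.

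For the forward implication, suppose $x$ leads to $x'$ via such a sequence. Since the sequence is active on $x$, Corollary \ref{coroactive} yields $x\geq y$, and Corollary \ref{cor:m}(i) gives $y'-y=\sum_{i=1}^m(y_i'-y_i)$, whence $x'=x+\sum_{i=1}^m(y_i'-y_i)=x+y'-y$. This exhibits $(y,y')\in\cl(\cR)$ with $x\geq y$ and $x'=x+y'-y$, as required. For the converse, given $(y,y')\in\cl(\cR)$ with $x\geq y$ and $x'=x+y'-y$, I would write $(y,y')$ as a finite sum $\oplus_{i=1}^m(y_i\ce{->} y_i')$ of reactions in $\cR$ (the empty sum case $(0,0)$ being the trivial situation $x'=x$). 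Because $x\geq y$, Corollary \ref{coroactive} makes the ordered sequence $y_1\ce{->} y_1',\ldots,y_m\ce{->} y_m'$ active on $x$, and Corollary \ref{cor:m}(i) again identifies the net displacement, so that $x'=x+y'-y=x+\sum_{i=1}^m(y_i'-y_i)$. Hence $x$ leads to $x'$.

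It remains to check that the second phrasing, $(x,x')\geq(y,y')$ and $(x,x')\sim(y,y')$, is interchangeable with the first. This is a one-line computation: $(x,x')\sim(y,y')$ is by definition $x'-x=y'-y$, i.e.\ $x'=x+y'-y$; and together with $x\geq y$ this gives $x'=y'+(x-y)\geq y'$, so $(x,x')\geq(y,y')$. Conversely, $(x,x')\geq(y,y')$ forces $x\geq y$, while $(x,x')\sim(y,y')$ forces $x'=x+y'-y$, recovering the first formulation.

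I expect no genuine obstacle here: the statement is essentially a repackaging of Corollaries \ref{coroactive} and \ref{cor:m} once ``leads to'' is written out through the closure. The only points demanding care are the bookkeeping of the empty sequence ($m=0$, giving $(0,0)\in\cl(\cR)$ and $x=x'$) and verifying that the two displayed characterizations really coincide, which is exactly the short algebra above.
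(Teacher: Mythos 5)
Your proposal is correct and matches the paper's intended argument: the paper gives no separate proof, presenting Lemma \ref{leads} as a direct consequence of Proposition \ref{unique_char} and Corollary \ref{cor:m} (via Corollary \ref{coroactive}), which is exactly the unfolding you carry out, including the empty-sequence case and the check that the two phrasings coincide. Nothing is missing; your write-up simply makes explicit what the paper leaves to the reader.
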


 Denote by $\cR(x)=\{x'\in \N_0^n| x\ \mathrm{leads\ to}\ x'\}$ the set of reachable states of $x\in \N_0^n$ via $\cR$.
\begin{corollary} 
For two reaction networks $\cR_1,\cR_2$ on the same set of species we have 
$$\cl(\cR_1)=\cl(\cR_2)\quad\implies \quad\mathrm{\ for\ all\ } x\in \N_0^n, \,\, \cR_1(x)=\cR_2(x).$$
\end{corollary}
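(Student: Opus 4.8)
The plan is to derive this corollary directly from Lemma \ref{leads}, which characterizes reachability purely in terms of the closure $\cl(\cR)$. The key observation is that Lemma \ref{leads} says $x$ leads to $x'$ via $\cR$ if and only if there exists some $(y,y')\in\cl(\cR)$ with $x\geq y$ and $x'=x+y'-y$. This characterization depends on $\cR$ \emph{only through} its closure $\cl(\cR)$; the set $\cR$ itself enters nowhere else in the statement. Hence if two networks have the same closure, their reachability relations must coincide.

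Concretely, I would argue as follows. Fix $x\in\N_0^n$ and suppose $\cl(\cR_1)=\cl(\cR_2)$. Take any $x'\in\cR_1(x)$, so that $x$ leads to $x'$ via $\cR_1$. By Lemma \ref{leads}, there exists $(y,y')\in\cl(\cR_1)$ with $x\geq y$ and $x'=x+y'-y$. Since $\cl(\cR_1)=\cl(\cR_2)$, the same element $(y,y')$ lies in $\cl(\cR_2)$, so by the converse direction of Lemma \ref{leads} applied to $\cR_2$, the state $x$ leads to $x'$ via $\cR_2$, i.e. $x'\in\cR_2(x)$. This shows $\cR_1(x)\subseteq\cR_2(x)$. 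By the symmetric argument, exchanging the roles of $\cR_1$ and $\cR_2$, one obtains $\cR_2(x)\subseteq\cR_1(x)$, and therefore $\cR_1(x)=\cR_2(x)$. As $x$ was arbitrary, this holds for all $x\in\N_0^n$.

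I do not expect any genuine obstacle here, since the corollary is essentially an immediate restatement of Lemma \ref{leads}: the lemma has already done the real work of reducing reachability to membership in the closure. The only point requiring minor care is to invoke both directions of the ``if and only if'' in Lemma \ref{leads} — the forward direction to extract a witness $(y,y')\in\cl(\cR_1)$ from reachability via $\cR_1$, and the backward direction to reconstruct reachability via $\cR_2$ from that same witness — and to note that the argument is symmetric in the two networks so that set equality, not merely one inclusion, follows.
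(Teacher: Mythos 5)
Your proof is correct and matches the paper's reasoning exactly: the paper states this corollary as an immediate consequence of Lemma \ref{leads} (with no written proof), precisely because the lemma characterizes reachability solely through $\cl(\cR)$. Your explicit two-directional application of the lemma, plus the symmetry argument, is just a careful spelling-out of that same observation.
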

Hence, having the same $\cl(\cR)$ for two RNs is in general stronger than having the same reachability sets for all initial states (in the latter case, the RNs are said to be structurally identical \cite{arxiv-20-wiuf-xu}). 

Say a reaction $y\to y'\in\cR$ has a {\it catalytic species} if there is a species $S_i$ such that $y^i> 0,(y')^i>0$.
Then an RN with no catalytic species is an RN where no reaction has a catalytic species. For RNs without catalytic species, the previous corollary can be strengthened.

\begin{theorem} 
For two reaction networks $\cR_1,\cR_2$ on the same set of species and  without catalytic species, we have 
$$\cl(\cR_1)=\cl(\cR_2)\quad\iff\quad \mathrm{\ for\ all\ } x\in \N_0^n, \,\, \cR_1(x)=\cR_2(x).$$
\end{theorem}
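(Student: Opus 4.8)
The forward implication is precisely the corollary immediately preceding the theorem (which holds for arbitrary RNs), so the plan is to establish the reverse implication: assuming $\cR_1(x)=\cR_2(x)$ for all $x\in\N_0^n$, I want to conclude $\cl(\cR_1)=\cl(\cR_2)$.

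First I would reduce the goal to mutual containment of the generators. Since $\cl$ is a closure operator—monotone ($A\subseteq B$ implies $\cl(A)\subseteq\cl(B)$) and idempotent ($\cl(\cl(A))=\cl(A)$), both immediate from the defining property that $\cl(A)$ is the smallest closed set containing $A\cup\{(0,0)\}$—it suffices to prove $\cR_1\subseteq\cl(\cR_2)$ and, symmetrically, $\cR_2\subseteq\cl(\cR_1)$. Indeed, $\cR_1\subseteq\cl(\cR_2)$ then upgrades to $\cl(\cR_1)\subseteq\cl(\cl(\cR_2))=\cl(\cR_2)$, and the reverse containment follows in the same way, forcing equality. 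By symmetry I would only treat $\cR_1\subseteq\cl(\cR_2)$.

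Fix a reaction $r=(y,y')\in\cR_1$. Because $\cR_1$ has no catalytic species, its reactant and product have disjoint support, i.e. $y\wedge y'=0$. As $r\in\cl(\cR_1)$, applying Lemma \ref{leads} with $x=y$ shows that $y$ leads to $y'$ via $\cR_1$, so $y'\in\cR_1(y)$. The hypothesis $\cR_1(y)=\cR_2(y)$ gives $y'\in\cR_2(y)$, and a second application of Lemma \ref{leads} yields some $(z,z')\in\cl(\cR_2)$ with $z\le y$ and $y'=y+z'-z$. The concluding step is to identify $(z,z')$ with $(y,y')$: writing $w=y-z\ge0$, the relation $y'=y+z'-z$ rearranges to $y'=z'+w$, so $w=y'-z'\le y'$ (since $z'\ge0$) while also $w=y-z\le y$; hence $w\le y\wedge y'=0$, forcing $w=0$ and therefore $(y,y')=(z,z')\in\cl(\cR_2)$.

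The main obstacle, and the only place the hypothesis enters, is this final identification. A priori, reachability only guarantees a witness $(z,z')\in\cl(\cR_2)$ that is equivalent to $(y,y')$ and coordinatewise below it, not equal to it; the disjoint-support condition $y\wedge y'=0$ is exactly what collapses this ambiguity, because any common catalytic excess $w$ is bounded by $y\wedge y'$. Without the no-catalytic-species assumption, $w$ may be nonzero—for example $(z,z')=(0,S_1)$ witnesses the reachability required by $r=(S_1,2S_1)$, yet $r\notin\cl(\{(0,S_1)\})$—which is precisely why the equivalence can fail when catalytic species are present.
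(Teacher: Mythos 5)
Your proposal is correct and follows essentially the same route as the paper: reduce to showing each generator $r=(y,y')\in\cR_1$ lies in $\cl(\cR_2)$, apply Lemma \ref{leads} in both networks together with the hypothesis $\cR_1(y)=\cR_2(y)$, and then use the no-catalytic-species condition to force the reachability witness $(z,z')\le(y,y')$ with $(z,z')\sim(y,y')$ to equal $(y,y')$. The paper phrases this last step as minimality of $r$ in its equivalence class while you bound the diagonal shift $w$ by $y\wedge y'=0$, but these are the same computation.
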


\begin{proof}
We only need to prove the right to left implication. By symmetry it is sufficient to prove $\cl(\cR_1)\subseteq\cl(\cR_2)$. Consider the set $B_0=\{r\in\N_0^{n}\times \N_0^n|r\sim r_0\}$, $r_0\in \cR_1$. Any element of $B_0$ takes the form $r=r_0+(y,y)\in\N_0^n\times \N_0^n$ for some $y\in\Z^n$. As there are no catalytic species, then $y\ge 0$ and $r\ge r_0$.

Let $r_0=(y_0,y_0')$. By definition, $y_0$ leads to $y_0'$ in $\cR_1$. As $\cR_1(y_0)=\cR_2(y_0)$,  then also  $y_0'\in\cR_2(y_0)$. Hence, $y_0$ leads to $y_0'$ in $\cR_2$, and by Lemma \ref{leads} there is an element $\widetilde{r}\in \cl(\cR_2)$ that realises this. By definition, $\widetilde{r}\sim r_0$ and $\widetilde{r}\in B_0$, hence $\widetilde{r}\geq r_0$ from above. By Lemma \ref{leads}, $r_0\geq \widetilde{r}$, hence $\widetilde{r}=r_0$ and $r_0\in \cl(\cR_2)$.
Now consider an arbitrary element $\widetilde{r}\in\cl(\cR_1)$,  given as $\widetilde{r}=\widetilde{r}_1\oplus \ldots \oplus \widetilde{r}_k$ with $\widetilde{r}_i\in \cR_1$, $i=1,\ldots,k$. We have $\widetilde{r}_i\in\cl(\cR_2)$ for $i=1,\ldots,k$, hence also $\widetilde{r}\in \cl(\cR_2)$ by the  closure property.
\end{proof}

 Finally, we characterize the property of being essential. Moreover, we prove the equivalence between essential RNs defined  in Definition \ref{def_CRNs}\eqref{def_crns3} and recurrent RNs defined  in  \cite{jmb-14-pauleve-craciun-koeppl}.

 \begin{proposition}\label{essn}
An RN $\cR$ is essential if and only if for  $x,x'\in\N^n_0$, if $x$ leads to $x'$, then $x'$ leads to $x$. 
\end{proposition}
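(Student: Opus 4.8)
The plan is to prove both implications through Lemma~\ref{leads}, which converts the relation ``$x$ leads to $x'$'' into the existence of an element of $\cl(\cR)$ that is dominated by $x$ and carries the correct net gain, combined with the fact that $\cR$ is essential precisely when $\cl(\cR)$ is reversible (Definition~\ref{def_CRNs}).

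For the forward implication I would assume $\cl(\cR)$ is reversible and suppose $x$ leads to $x'$. Lemma~\ref{leads} supplies $(y,y')\in\cl(\cR)$ with $x\ge y$ and $x'=x+y'-y$; reversibility then gives $(y',y)\in\cl(\cR)$. Since $x\ge y$ we have $x'=y'+(x-y)\ge y'$, and also $x'+y-y'=x$, so a second application of Lemma~\ref{leads}, now to the element $(y',y)$, shows that $x'$ leads to $x$. This direction is routine.

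For the converse I would assume reachability is symmetric and aim to show that every $(y,y')\in\cl(\cR)$ has its inverse $(y',y)\in\cl(\cR)$, which is exactly reversibility of $\cl(\cR)$; write $v=y'-y$. Taking $x=y$ in Lemma~\ref{leads} shows that $y$ leads to $y'$, so by hypothesis $y'$ leads to $y$, and Lemma~\ref{leads} then furnishes an element $(w,w-v)\in\cl(\cR)$ with $w\le y'$. The decisive step is two computations straight from Definition~\ref{add}, both using $w\le y'$: first $(y,y')\oplus(w,w-v)=(y,y)$, so $(y,y)\in\cl(\cR)$ by the closure property; then $(w,w-v)\oplus(y,y)=(y',y)$, so $(y',y)\in\cl(\cR)$. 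Hence $\cl(\cR)$ is reversible and $\cR$ is essential.

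The hard part is exactly this converse. The realizer $(w,w-v)$ returned by Lemma~\ref{leads} may have reactant $w$ lying strictly below $y'$, and one cannot in general recover $(y',y)$ by adjoining the slack $y'-w$ as a spectator, since $\cl(\cR)$ need not be closed under adding catalytic species (already the single degradation reaction $\{(1,0)\}$, whose closure is $\{(k,0)\colon k\ge 0\}$, fails this). The device that circumvents the obstacle is to first manufacture the neutral element $(y,y)\in\cl(\cR)$ from $(y,y')$ and $(w,w-v)$, and then compose $(w,w-v)$ with it on the right, which lifts the reactant from $w$ up to $y'$ while preserving the net $-v$. I expect the only genuine work to be verifying these two $\oplus$-identities directly from Definition~\ref{add} under the inequality $w\le y'$; the surrounding steps are bookkeeping with Lemma~\ref{leads} and the closedness of $\cl(\cR)$ under $\oplus$.
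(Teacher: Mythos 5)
Your proof is correct, and the converse direction takes a genuinely different and in fact leaner route than the paper's. The paper handles the converse by invoking Zorn's lemma: it picks a minimal element $r_*\le r_0$ of $\{r\in\cl(\cR)\mid r\sim r_0\}$, applies the symmetry hypothesis \emph{twice} (producing $\widetilde{r}\le r_*^{-1}$ and then $\widehat{r}\le\widetilde{r}^{-1}$), uses minimality to force $\widehat{r}=r_*$ and hence $r_*^{-1}=\widetilde{r}\in\cl(\cR)$, and only then concludes via the sandwich identity $r_0^{-1}=r_*^{-1}\oplus r_0\oplus r_*^{-1}$. You apply the hypothesis only once, accept whatever realizer $(w,w-v)$ with $w\le y'$ Lemma~\ref{leads} hands you, and verify directly that $(y,y')\oplus(w,w-v)=(y,y)$ and $(w,w-v)\oplus(y,y)=(y',y)$; both computations check out from Definition~\ref{add} under $w\le y'$ and $w-v-y'=w-y'\le 0$, so closedness of $\cl(\cR)$ under $\oplus$ gives $(y',y)\in\cl(\cR)$ outright. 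By associativity your two steps amount to the identity $(y',y)=(w,w-v)\oplus(y,y')\oplus(w,w-v)$, which is exactly the paper's sandwich identity but established for an \emph{arbitrary} realizer rather than only for the inverse of a minimal element---showing that the Zorn's lemma step and the second application of the hypothesis are dispensable. What your argument buys is a constructive, choice-free proof (relevant since the paper allows infinite RNs); what the paper's buys is chiefly the minimal-element picture of the equivalence classes in $\cl(\cR)$, which ties into the geometric remarks following the proposition, but nothing logically essential. Your diagnosis of the obstacle (that one cannot pad $(w,w-v)$ with the spectator $y'-w$, illustrated by $\cl(\{(1,0)\})=\{(k,0)\colon k\ge 0\}$) is also accurate.
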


\begin{proof}
If $\cR$ is an essential RN, then 
as a consequence of Lemma \ref{leads},  $x$ leads to $x'$ whenever $x'$ leads to $x$. Oppositely, assume that $\cR$ is  such that  $x$ leads to $x'$ whenever $x'$ leads to $x$ for all $x,x'\in \N_0^n$. Then, for any $r_0\in \cl(\cR)$, let $r_*=(y_*,y_*')\leq r_0$ be a minimal element of $\{r\in\cl(\cR)|\ r\sim r_0\}$ (which exists by Zorn's lemma, but is not necessarily unique). Note that by Lemma \ref{leads} we have that $y_*$ leads to  $y_*'$, hence by assumption also that $y_*'$ leads to  $y_*$. Then by Lemma \ref{leads} there is  $\widetilde{r}\in \cl(\cR)$ with $\widetilde{r}\leq r_*^{-1}$ and $\widetilde{r}\sim r_*^{-1}$,  which is equivalent to $\widetilde{r}^{-1}\leq r_*$ and $\widetilde{r}^{-1}\sim r_*$. 
Similarly, we can find $\widehat{r}\in\cl(\cR)$ with $\widehat{r}\leq \widetilde{r}^{-1}$ and $\widehat{r}\sim \widetilde{r}^{-1}$. Thus, we have $\widehat{r}\leq r_*$ and $\widehat{r}\sim r_*$. As $r_*$ is chosen to be a minimal element of $\{r\in\cl(\cR)|\ r\sim r_0\}$, this implies  $\widehat{r}=r_*$ and thus $\widetilde{r}=r_*^{-1}$. Finally, as $\cl(\cR)$ is a closed set, it is enough to check the equality $r_0^{-1}=r_*^{-1}\oplus r_0\oplus r_*^{-1}$, and so $r_0^{-1}\in \cl(\cR)$.
 The proof of Proposition \ref{essn} is complete.
 \end{proof}
 
 In particular, the result characterizes and connects  the property of  $\cR$ to be  essential  with the geometry of $\cl(\cR)$. Considering the isometric involution defined by the inverse $r^{-1}$ of a reaction, we can equivalently say that $\cR$ is essential if and only if $\cl(\cR)$ is symmetric with respect to the above involution.

A semi-linear set is defined as a finite union of linear sets, where  a linear set is a set generated by a base vector $b\in \Z^n$ and \emph{period vectors} $p_1,\ldots p_k\in \Z^n$ as follows \cite{parikh1961language}: 
\[
L(b,p)=\left\{b+\sum_{i=1}^k\lambda_i p_i\Big| \lambda_1,\ldots \lambda_k\in \N_0\right\}.
\]
Semi-linear sets are widely studied in computer science with applications  in automata theory \cite{Parikh_main}, formal languages \cite{ginsburg1964bounded}, and Presburger arithmetic \cite{Seymour/Spanier}, as well as in models of computation, such as Petri nets and vector addition systems \cite{Cook2009}.
 In terms of RNs,  the discrete dynamics of Petri nets and vector addition systems  might equivalently be represented by the discrete dynamics of RNs \cite{Cook2009}. Consequently, the reachable sets of RNs are not semi-linear in general, as this is known  to be the case of Petri nets  and   vector addition systems  \cite{HOPCROFT1979135,normal_petri}.
  Here, we will be concerned with a related question, namely whether the closure  $\cl(\cR)$ of an RN $\cR$, considered as a subset of $\N_0^n\times \N_0^n=\N_0^{2n}$, is semi-linear. 
Simple examples suggest this might be so: if  $\cR=\{\emptyset \ce{<=>} S_1, \emptyset \ce{<=>} S_2, \ldots,  \emptyset \ce{<=>} S_n\}$, then $\cl(\cR)=\N_0^{2n}$; and if $\cR=\{S_1 \ce{<=>} S_2\}$, then $$\cl(\cR)=\{\lambda_1(1,0,0,1)+\lambda_2(0,1,1,0)+\lambda_3(1,0,1,0)+\lambda_4(0,1,0,1) \mid \lambda_1,\lambda_2,\lambda_3,\lambda_4\in\N_0\}.$$
In both cases, the closure is semi-linear, in fact linear. The second example highlights the fact that the closure is generally not  contained in the linear set generated by the reactions.

Following the above discussion, we  ask whether  $\cl(\cR)$ is a semi-linear set. 
This is not the case, as will be seen by example. For this, we need the following lemma.
\begin{lemma}\label{contr_semilin}
Assume $A\subseteq \N_0^n\times\N_0^n$ is semi-linear, and let $x\in \N_0^n$. Then $A_x=\{(a,b)\in A\mid a=x\}$
is empty or a semi-linear set as well.
\end{lemma}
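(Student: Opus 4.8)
The plan is to reduce the statement to a single linear set, and then to the classical fact that the nonnegative integer solutions of a linear Diophantine system form a semi-linear set. First I would write $A=\bigcup_{j=1}^N L_j$ as a finite union of linear sets. Since $A_x=\bigcup_{j=1}^N (L_j)_x$ and a finite union of semi-linear sets is semi-linear, it suffices to treat the case $A=L(b,p)$ of a single linear set, with base $b\in\Z^{2n}$ and periods $p_1,\dots,p_k\in\Z^{2n}$ (recall $\N_0^n\times\N_0^n=\N_0^{2n}$).

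Next I would split each vector into its reactant block (first $n$ coordinates) and product block (last $n$ coordinates), writing $b=(\beta,\beta')$ and $p_i=(\pi_i,\pi_i')$ with $\beta,\beta',\pi_i,\pi_i'\in\Z^n$. An element $b+\sum_{i=1}^k\lambda_i p_i$ of $L$ belongs to $A_x$ exactly when its reactant block equals $x$, i.e. when $\lambda=(\lambda_1,\dots,\lambda_k)\in\N_0^k$ solves the linear Diophantine system $\sum_{i=1}^k\lambda_i\pi_i=x-\beta$. Writing $\Lambda\subseteq\N_0^k$ for the solution set of this system, we have $A_x=\{(x,\,\beta'+\sum_{i=1}^k\lambda_i\pi_i')\mid \lambda\in\Lambda\}$, i.e. $A_x$ is the image of $\Lambda$ under the affine map $\phi(\lambda)=(x,\beta')+\sum_{i=1}^k\lambda_i(0,\pi_i')$. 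Since an affine map sends a linear set $L(c,\{q_l\})$ to the linear set $L(\phi(c),\{\phi_0(q_l)\})$ (with $\phi_0$ the linear part of $\phi$) and sends unions to unions, it carries semi-linear sets to semi-linear sets. Thus the lemma reduces to showing that $\Lambda$ is empty or semi-linear.

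This semilinearity of $\Lambda$ is the \emph{crux}, and the only genuinely non-trivial step; everything else is routine bookkeeping. I would argue as follows. The homogeneous solution set $H=\{\mu\in\N_0^k\mid \sum_{i=1}^k\mu_i\pi_i=0\}$ is the set of lattice points in a pointed rational cone, hence a finitely generated submonoid of $\N_0^k$ by Gordan's lemma; its Hilbert basis $h_1,\dots,h_r$ is finite and $H=L(0,\{h_1,\dots,h_r\})$ is linear. Assume $\Lambda\neq\emptyset$ and let $s_1,\dots,s_m$ be the minimal elements of $\Lambda$ for the componentwise order, which are finite in number by Dickson's lemma. Given any $\lambda\in\Lambda$, choose $s_j\le\lambda$; then $\lambda-s_j\in\N_0^k$ and $\sum_{i=1}^k(\lambda_i-(s_j)_i)\pi_i=(x-\beta)-(x-\beta)=0$, so $\lambda-s_j\in H$ and $\lambda\in s_j+H$. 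As the reverse inclusion $s_j+H\subseteq\Lambda$ is immediate, $\Lambda=\bigcup_{j=1}^m L(s_j,\{h_1,\dots,h_r\})$ is semi-linear; the empty case occurs precisely when $\sum_i\lambda_i\pi_i=x-\beta$ has no nonnegative integer solution.

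Finally, I would note that the argument collapses to a single line if one may invoke the closure of semi-linear sets under intersection: the fibre $\{x\}\times\N_0^n=L\big((x,0),\{(0,S_1),\dots,(0,S_n)\}\big)$ is itself linear, whence $A_x=A\cap(\{x\}\times\N_0^n)$ is an intersection of two semi-linear sets, and so semi-linear. The hands-on route above is in effect a self-contained proof of the relevant instance of that closure property, with Dickson's and Gordan's lemmas carrying the real weight.
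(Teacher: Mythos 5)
Your proof is correct, but it takes a genuinely different route from the paper's at the crucial step. Both arguments start the same way, reducing to a single linear set $L(b,p)$. The paper then takes the base \emph{and the periods} to lie in $\N_0^n\times\N_0^n$, splits the periods according to whether their projection onto the reactant block is zero or non-zero, and observes that, by non-negativity, only finitely many $\N_0$-combinations of the non-zero-projection periods can have projection exactly $x$; the fibre $A_x$ is then the finite union, over this finite set of partial sums, of the linear sets obtained by freely attaching the zero-projection periods. You instead keep the periods in $\Z^{2n}$ (which is what the paper's own definition of a linear set permits), encode membership in the fibre as the solution set $\Lambda\subseteq\N_0^k$ of the Diophantine system $\sum_i\lambda_i\pi_i=x-\beta$, prove $\Lambda$ semi-linear via Dickson's lemma (finitely many minimal solutions) together with Gordan's lemma (the homogeneous solutions form a finitely generated monoid), and push the result forward under an affine map, which preserves semi-linearity. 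The trade-off is this: the paper's argument is more elementary --- no Hilbert bases --- but its finiteness step genuinely requires non-negative periods, so it silently relies on the classical (and not entirely trivial) normalization that a semi-linear subset of $\N_0^{2n}$ can be rewritten as a finite union of linear sets with periods in $\N_0^{2n}$; your argument needs no such normalization, is self-contained modulo the two standard combinatorial lemmas, and in effect proves the relevant instance of the Ginsburg--Spanier closure of semi-linear sets under intersection, which you correctly identify as the one-line alternative.
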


\begin{proof}
It is enough to prove it for $A$ a linear set, as semi-linear sets are finite unions of linear sets. So assume $A$ is linear and that $A_x$ is non-empty. We want to show that $A_x$ is semi-linear. Let $A$ be given by $L(b,p)$ with base vector $b\in \N_0^n\times\N_0^n$ and non-zero period vectors $p_1,\ldots p_k\in \N_0^n\times\N_0^n$. Let $\text{proj}(\cdot)$ denote the projection onto the first $n$-coordinates.

Without loss of generality, we let $p_1,\ldots, p_m$ be the period vectors with non-trivial projection onto the first $n$ coordinates, that is, $\text{proj}(p_i)\neq 0$, $i=1,\ldots,m$ and  $\text{proj}(p_i)= 0$,   $i=m+1,\ldots,k$. 
Then, there are finitely many vectors $\lambda=(\lambda^1,\ldots,\lambda^m) \in \N_0^m$, such that $\text{proj}(b+\sum_{i=1}^m\lambda^ip_i)=x$, as all $p_i$ are non-zero and non-negative. Denote this finite set by $B$, that is, 
$$B=\Big\{b+\sum_{i=1}^m\lambda_ip_i\Big|\lambda \in \N_0^m \text{ and }\text{proj}\Big(b+\sum_{i=1}^m\lambda_ip_i\Big)=x\Big\}.$$
Furthermore, for the first $n$ coordinates, if $p_i$ has a non-zero entry  whenever $x$  in $B$ has a zero entry, then necessarily  $\lambda_i=0$, $i=1,\ldots,m$.

Finally, $A_x$ might be written as 
$$A_x=\bigcup_{c\in B}\Big\{c+\sum_{m+1}^k\lambda_ip_i\Big| \lambda_k,\ldots \lambda_{m+1}\in \N_0\Big\},$$
which is a finite union of linear sets, hence a semi-linear set.
\end{proof}

As it is onerous to prove that a set is not semi-linear, we consider a concrete RN. Using Lemma \ref{contr_semilin}, we reduce the RN  to a known example originally given for a vector addition system, which is not semi-linear  \cite{HOPCROFT1979135}.

\begin{corollary}
There exists an RN $\cR$ such that the closure $\cl(\cR)$ of $\cR$ is not  a semi-linear set.
\end{corollary}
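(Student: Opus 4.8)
The plan is to realize a vector addition system (VAS) with non-semilinear reachability set as a reaction network, and then transport the non-semilinearity to the closure by slicing with Lemma \ref{contr_semilin}. Concretely, by \cite{HOPCROFT1979135} there is a VAS $V$ on $\N_0^d$ with transition vectors $v_1,\dots,v_\ell\in\Z^d$ and an initial marking $s_0\in\N_0^d$ whose reachability set $R\subseteq\N_0^d$ is not semilinear. Following the correspondence between VAS and discrete RNs \cite{Cook2009}, I would encode each transition $v_j$ as the reaction $(0\vee(-v_j),\,0\vee v_j)$, whose net gain is $v_j$ and which, under the compatibility condition \eqref{eq:x}, is active at a state exactly when the transition is enabled at the corresponding marking. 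Thus the $\cR$-reachable states coincide with the $V$-reachable markings.

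The difficulty is that a slice $\cl(\cR)_x$ records summed reactions whose reactant is \emph{exactly} $x$ --- equivalently, by Corollary \ref{cor:m}(ii), sequences whose \emph{minimal} enabling marking is $x$ --- which is stronger than mere reachability from $x$. To make a single slice reproduce all of $R$, I would gate the simulation: introduce two fresh species, a trigger $B$ and an activator $A$; append $A$ to both sides of every transition reaction (so $A$ is catalytic) and add one initiation reaction $B\to s_0+A$. Now start from the state $x=B$. Because $B$ is a fresh unit and $A$ is absent, the only element $(y,y')\in\cl(\cR)$ with $y\le B$ and $y\neq 0$ has $y=B$; hence by Lemma \ref{leads} the second-coordinate projection of $\cl(\cR)_B$ equals $\cR(B)\setminus\{B\}$. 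Firing the initiation reaction produces $s_0+A$, and thereafter the activator enables a faithful simulation of $V$, so $\cR(B)=\{B\}\cup(R+A)$, where $A$ also denotes its unit vector. Therefore $\cl(\cR)_B=\{B\}\times(R+A)$.

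To conclude, observe that since the first block of coordinates is pinned to the constant $B$, the set $\cl(\cR)_B$ is semilinear if and only if $R+A$ is, and translating by a fixed vector preserves semilinearity \cite{Seymour/Spanier}; hence $\cl(\cR)_B$ is \emph{not} semilinear because $R$ is not. Were $\cl(\cR)$ semilinear, Lemma \ref{contr_semilin} would force $\cl(\cR)_B$ to be semilinear, a contradiction. This yields an RN whose closure is not semilinear.

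The step I expect to be the main obstacle is the exact identification $\cl(\cR)_B=\{B\}\times(R+A)$. The content there is twofold: first, ruling out spurious summed reactions with reactant $\le B$ (handled by the activator $A$, which prevents any purely producing transition from firing before initiation, and by Proposition \ref{properties_sum}\eqref{properties_sum4}, which forces the reactant of a nonempty sum to dominate its first reactant); and second, checking via Corollary \ref{cor:m}(ii) that every reachable marking is witnessed by a sequence whose minimal reactant is genuinely $B$ rather than something strictly smaller --- that is, that all intermediate reactants are met by what the initiation reaction and the preceding steps have already produced. Once this encoding is verified, the transfer of non-semilinearity through Lemma \ref{contr_semilin} is routine.
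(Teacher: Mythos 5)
Your proposal is correct and takes essentially the same route as the paper's proof: both realize the non-semilinear vector addition system of \cite{HOPCROFT1979135} as an RN, prepend a single trigger reaction from a fresh species (your $B\to s_0+A$ versus the paper's $S_5\to S_0+S_2$), identify the slice of the closure at the trigger state with the (translated) reachability set, and conclude by contradiction via Lemma \ref{contr_semilin}. The only difference is cosmetic: the paper uses the concrete four-reaction encoding, whose control species $S_0,S_3$ already give every reaction a nonzero reactant, so the extra catalytic activator $A$ you introduce to rule out spurious sums is not needed there.
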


\begin{proof}
We will use that there exists a 6-dimensional vector addition system with a reachability set that is not semi-linear \cite{HOPCROFT1979135}. To conclude we translate that example to the following RN, 
$$S_0+S_2\ce{->} S_0+S_1,\quad S_0\ce{->} S_3,\quad,S_3+S_1\ce{->} S_3+2S_2,\quad S_3\ce{->} S_0+S_4,$$
such that the reachability set of  \cite[Lemma 2.8]{HOPCROFT1979135}, which is  not semi-linear, corresponds to $\cR(S_0+S_2)$ (the reachability set $\cR(x)$ with $x=S_0+S_2$).

We construct a new RN, $\widetilde{\cR}=\{S_5\ce{->} S_0+S_2\}\cup\cR$. Then, $\widetilde{\cR}(S_5)=\cl(\widetilde{\cR})_{S_5}$, where $\cl(\widetilde{\cR})_{S_5}$ is $A_x$ with $A=\cl(\widetilde{\cR})$ and $x=S_5$ (see Lemma \ref{contr_semilin}),  can be written as $\{S_5\}\cup\cR(S_0+S_2)$. \black{We note that the union of a finite set with a non-semi-linear set is  a non-semi-linear set, hence it follows by contradiction and Lemma \ref{contr_semilin} that $\cl(\widetilde{\cR})$ is not semi-linear.}
\end{proof}

\section{Reduction of RNs}\label{sec:reduction}

In this section, we study  graphical  reduction of  an RN to a smaller (reduced) RN in terms of the number of species, entirely based on the reactions alone and not their stochastic propensities to occur. The number of reactions of the reduced RN might be bigger or smaller than the original RN.
Specifically, we provide a definition of  {\it eliminable} species  and that of a {\it reduced} RN, obtained by removal of a set eliminable species.

 The motivation  comes from studying stochastic  RNs with fast-slow dynamics  \cite{CW16,HW2021}. We motivate with an example.

\begin{example}\label{ex:gene}
A simple model of protein production is the following:
\[G\ce{<=>} G',\quad G'\ce{->} G'+P,\quad P\ce{->} 0,\]
where $G$ denotes the inactive state of a gene and  $G'$ the active state, and $P$ is a protein produced while the gene is active \cite{peccoud1995markovian}. The protein is  subsequently  degraded. One might interpret the RN as modelling a single polyploid cell with $K$ copies of the gene, some of which will be in the active state, while  the rest will be in the inactive state. Human cells are diploid and $K=2$.

Assume the reactions involving the active gene in the reactant, $G'\ce{->} G,$ $G'\ce{->} G'+P$, occur at a fast rate compared to the other two reactions. Then it is reasonable to assume that whenever a  gene copy  is activated, a sequence of fast reactions that  eventually ends with deactivation of the gene copy again, occurs before a protein is degraded or another gene copy is activated. Such a sequence (including conversion of $G$ into $G'$) takes the form
$$G\ce{->} G',\quad \black{\underbrace{G'\ce{->} G'+P,\quad\ldots,\quad G'\ce{->} G'+P}_{k\ \mathrm{instances}}},\quad G'\ce{->} G.$$
The net effect of the sequence is simply the sum of the reactions: $G\ce{->} G+kP$. It appears that the active gene $G'$ has been eliminated from the RN through the fast reactions $G'\ce{->} G,$ $G'\ce{->} G'+P$.

To formalize this, let $\cU=\{G'\}$ and $\cF=\{G'\ce{->} G, G'\ce{->} G'+P\}$. Then, we say $\cU$ is eliminable with respect to $\cF$, resulting in the reduced RN,
\[\cR_{\cU,\cF}^*=\{P\ce{->} 0\}\cup\{G\ce{->} G+kP|k\in\N_0\}.\]
The reduced RN has infinitely many reactions.
\end{example}

In the example above,  any sequence of fast reactions  (those of $\cF$) will eventually  be `terminated' by $G'\ce{->} G$. If only $G'\ce{->} G'+P$ is fast, while  $G'\ce{->} G$ is not, then arbitrarily many 
protein copies would be produced before the gene copy is deactivated again. In this case, the reduced RN does not  make sense. Thus, it should  be a requirement that any such sequence of fast reactions is eventually terminated. Oppositely, if only $G'\ce{->} G$ is fast, then the reaction $G'\ce{->} G'+P$ is essentially blocked from occurring as there will be no active gene copies.  \black{Thus, it is reasonable to remove $G\ce{->}G+P$ from the reduced RN.}

To formalize elimination and reduction, we introduce some notation.
Let $\cR\subseteq \N_0^n\times \N_0^n$ be an RN and let $\cU\subseteq\cS$. Furthermore, let $\cR_\cU\subseteq \cR$ and $\cR_\cU'\subseteq \cR$ be the subsets of reactions containing species of $\cU$ in the {\it reactant} and the {\it product}, respectively, 
\begin{align}
\cR_{\cU}&=\{y\ce{->} y'\in \cR\ |\ \cU\cap \supp(y)\neq \emptyset\},\nonumber \\ \cR_{\cU}'&=\{y\ce{->} y'\in \cR\ |\ \cU\cap \supp(y')\neq \emptyset\},\label{cru}
\end{align}
where $\supp(x)=\{S_k | k=1,\dots, n, x^k>0\}$ is the support of $x\in\N_0^n$.
Let $\overline{\cR}=\cl(\cR)$ for convenience, and denote by $\overline{\cR}_{\cU}$ and  $\overline{\cR}_{\cU}'$ the collection of elements in $\overline{\cR}$ containing species of $\cU$ in the reactant and the product, respectively, analogously to \eqref{cru}. 
 We also write $\cR_0=\cR\setminus (\cR_{\cU}\cup \cR_{\cU}')$ and $\overline{\cR}_0=\overline{\cR}\setminus(\overline{\cR}_{\cU}\cup\overline{\cR}_{\cU}')$. It is straightforward to see that $\overline{\cR}_0$ is a closed set (under $\oplus$), $\overline{\cR}_0\supseteq \cl(\cR_0)$,  and in general, $\overline{\cR}_0\neq\cl(\cR_0)$.

We proceed by defining the reduction procedure, and give  further examples below. 

\begin{definition}\label{def.red2}
 Let $\cR$ be an RN and $\cU\subseteq\cS$. The species in $\cU$ are said to be {\it eliminable} (and the set $\cU$ also {\it eliminable}) in $\cR$ with respect to a set of reactions $\cF\subseteq\cR_\cU$, if for any 
$r_0\in\cR_\cU'$ and any $r_1\in \cl(\cF)$ such that $r_0\oplus r_1\not\in \overline{\cR}_{\cU}$, there exists $r_2\in\cl(\cF)$ such that $r_0\oplus r_1\oplus r_2\in \overline{\cR}_{0}$. 

The {\it reduced RN} associated to this elimination is $\cR_{\cU,\cF}^*=\cR_0\cup \cR_{\cU,\cF}$, where
\begin{equation}\label{eq:ruf}
\cR_{\cU,\cF}=\{r_0\oplus r_1\in\overline{\cR}_{0} |\ r_0\in \cR_{\cU}', r_1\in \cl(\cF)  \}\setminus \{r\in\N_0^n\times \N_0^n|r\sim (0,0)\}.
\end{equation}
\end{definition}

Recall that $0\in \cl(\cF)$ (see Section \ref{sec.agb}). As a consequence, if $r_0\oplus r_1\in \overline{\cR}_0$, then it follows that $r_0\oplus r_1\notin \overline{\cR}_{\cU}$. In that case, if we choose $r_2=0\in \cl(\cF)$, then $r_0\oplus r_1\oplus r_2=r_0\oplus r_1\in \overline{\cR}_0$. Thus, when verifying  eliminability of a  subset of species, we only need to consider the case   \black{$r_0\oplus r_1\notin \overline{\cR}_{\cU}\cup \overline{\cR}_0$, that is, $r_0\oplus r_1\in\overline{\cR}_{\cU}'\setminus \overline{\cR}_{\cU}$}.

\black{
We provide the following interpretation of eliminability.  If a set $\cU$  is eliminable, then for any reaction $r_0$ that has species in $\cU$ in its product, but not in its reactant, and any finite sum of reactions from $\cF$, that is, any $r_1\in \cl(\cF)$, the following holds:
\begin{itemize}
\item[-] either $r_0 \oplus r_1$ is in $\overline{\cR}_0$, that is, it does not contain  species in $\cU$ in its reactant nor product,
\item[-] or there is a finite sum of reactions from $\cF$, which  equals $r_2$, and such that $r_0 \oplus r_1 \oplus r_2$ is in $\overline{R}_0$.
\end{itemize}
}

From Definition \ref{def.red2}, it is clear that the reduced RN might be identified as a subset of $\N_0^{n-d}\times \N_0^{n-d}$, with $|\cU|=d\le n$.

\begin{example}
We return to Example \ref{ex:gene} with $\cU=\{G'\}$ and $\cF=\{G'\ce{->} G, G'\ce{->} G'+P\}$. Choosing $r_0,r_1$ as in Definition \ref{def.red2}, such that $r_0\oplus r_1\not\in \overline{\cR}_{\cU}$, then $r_0=G\ce{->} G'$, and $r_1$ is  either \black{the sum of $k$ instances of the reaction $G'\ce{->} G'+P$},   
or \black{the sum of $k$ instances of the reaction $G'\ce{->} G'+P$ with an additional summation by} $G'\ce{->} G$. In the first case, $r_0\oplus r_1=G\ce{->} G'+kP$, and in the second,  $r_0\oplus r_1=G\ce{->} G+kP$. In the latter case, Definition \ref{def.red2} is fulfilled by choosing $r_2=0\in\cl(\cF)$, while in the former, the definition is fulfilled by choosing $r_2=G'\ce{->} G$.
\end{example}

We might interpret Definition \ref{def.red2} in the following way. If $r_0\oplus r_1=(y,y')\not\in \overline{\cR}_{\cU}$, then  $y'$ can be produced from $y$ alone (which contains no species of $\cU$). If $y'$ contains  species of $\cU$, then these can be degraded by reactions of $\cF$. This is guarenteed by the existence of $r_2$. Thus, any  species of $\cU$ produced in this way, can subsequently be degraded again through fast reactions. 

\begin{example}
A more realistic model of protein production is the following \cite{hornos2005self,kepler2001stochasticity}:
\[G\ce{<=>} G',\quad G'\ce{->} G'+R,\quad R\ce{->} R+P,\quad R\ce{->} 0,\quad P\ce{->} 0,\]
where $G$,  $G'$, and $P$ are as before, and $R$ is an intermediate molecule (the mRNA), produced by transcription of the gene. The mRNA is produced by the active gene, and each copy of the mRNA is subsequently translated into protein. Both mRNA and protein might be degraded.

Take $\cU=\{G',R\}$ and 
$$\cF=\cR_\cU=\{G'\ce{->}G, G'\ce{->} G'+R,R\ce{->} R+P,R\ce{->} 0\}.$$
Furthermore, $\cR_{\cU}'=\{G\ce{->}G',G'\ce{->} G'+R,R\ce{->} R+P\}$. If $r_0\in\cR_{\cU}'$ and $r_1\in \cl(\cF)$ are such that $r_0\oplus r_1\not\in \overline{\cR}_{\cU}$, then it must be that  $r_0=\{G\ce{->}G'\}$. Examples of $r_1$, fulfilling the requirement   $r_0\oplus r_1\not\in \overline{\cR}_{\cU}$, include:
\begin{itemize}
\item[(i)] $r_1=G'\ce{->}G$;, here $r_0\oplus r_1=G\ce{->}G$,
\item[(ii)] $r_1=(G'\ce{->} G'+R)\oplus(R\ce{->} R+P)\oplus(R\ce{->} R+P)=G'\ce{->} G'+R+2P$; here $r_0\oplus r_1=G\ce{->}G'+R+2P$,
\item[(iii)] $r_1=(G'\ce{->} G'+R)\oplus(R\ce{->} R+P)\oplus(R\ce{->} 0)=G'\ce{->} G'+P$; here $r_0\oplus r_1=G\ce{->}G'+P$.
\end{itemize}
In either case, there exists $r_2\in\cl(\cF)$, such that $r_0\oplus r_1\oplus r_2\in \overline{\cR}_0$: (i) $r_2=G'\ce{->} G$, (ii) $r_2=(G'\ce{->} G)\oplus (R\ce{->} 0)$, (iii) $r_2=G'\ce{->} G$.

The set $\cU$ is eliminable with respect to $\cF$, resulting in the reduced RN,
\[\cR_{\cU,\cF}^*=\{P\ce{->} 0\}\cup\{G\ce{->} G+kP|k\in\N_0\},\]
which is the same RN as in Example \ref{ex:gene}.
\end{example}

We elaborate further on the properties of elimination.

\begin{lemma}\label{lem:red1}
Let $\cR$ be an RN, and let $\cU\subseteq\cS$. If $r_0\in\cR_\cU'$, $r_1=\oplus_{i=1}^m r_{1i},$ $r_{1i}\in\cR_{\cU}$, $i=1,\ldots,m$, such that $r_0\oplus r_1\not\in \overline{\cR}_{\cU}$,  then $r_0\in\cR_\cU'\setminus\cR_{\cU}$ and $r_0\oplus (\oplus_{i=1}^k r_{1i})\in\overline{\cR}_{\cU}'\setminus \overline{\cR}_{\cU}$ for $k=1,\ldots,m-1$. If $r_0\oplus r_1\in \overline{\cR}_0$, then $r_{1m}\in\cR_\cU\setminus\cR_\cU'$ and $\oplus_{i=k}^{m}r_{1i}\in \overline{\cR}_{\cU}\setminus \overline{\cR}_{\cU}'$ for $k=1,\ldots,m$. 
\end{lemma}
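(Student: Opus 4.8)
The statement splits into two symmetric halves, and I would treat them in parallel using the structural information carried by the sum operation. The key tool throughout is Proposition~\ref{properties_sum}\eqref{properties_sum4}: for any finite sum $\oplus_{i=1}^m s_i$, the reactant of the sum equals the reactant $y_1$ of the first summand whenever consecutive summands ``match up'' ($y_{i-1}'=y_i$), and more generally the reactant and product of a partial sum are governed by the first and last summands. The guiding heuristic is that in a sum $r_0\oplus r_{11}\oplus\cdots\oplus r_{1m}$, the species present in the reactant of the total sum are controlled from the left and those in the product from the right, so that membership in $\overline{\cR}_{\cU}$ (species of $\cU$ in the reactant) or $\overline{\cR}_{\cU}'$ (species of $\cU$ in the product) can be tracked coordinate-by-coordinate.

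For the first assertion, I would argue contrapositively on the left end. Write $r_0=(y_0,y_0')$ and set $s_k=r_0\oplus(\oplus_{i=1}^k r_{1i})$ with reactant denoted $z_k$. By Corollary~\ref{cor:m}\eqref{unique_char2}, the reactant $z_k$ satisfies $z_k \ge y_0$ in every coordinate (taking $x=z_k$, the condition at $k=0$ forces $z_k\ge y_0$). Hence $\supp(y_0)\subseteq\supp(z_k)$ for every $k$. Now suppose $r_0\in\cR_\cU$, i.e.\ $\cU\cap\supp(y_0)\neq\emptyset$; then $\cU\cap\supp(z_k)\neq\emptyset$ for all $k$, in particular $z_m\neq\emptyset$ on $\cU$, so $r_0\oplus r_1=s_m\in\overline{\cR}_\cU$, contradicting the hypothesis. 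This gives $r_0\in\cR_\cU'\setminus\cR_\cU$. The same monotonicity $z_k\ge y_0$, combined with the fact that once a coordinate in $\cU$ leaves the reactant it cannot be forced back in while the total sum avoids $\overline{\cR}_\cU$, shows each partial sum $s_k$ for $k\le m-1$ still lies in $\overline{\cR}_\cU'\setminus\overline{\cR}_\cU$: membership in $\overline{\cR}_\cU'$ follows because the partial product carries species of $\cU$ that must later be removed (otherwise $s_m$ would already avoid $\cU$ in the product too early), while non-membership in $\overline{\cR}_\cU$ is inherited from $s_m\notin\overline{\cR}_\cU$ together with the reactant-monotonicity.

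For the second assertion I would run the dual argument from the right end, using Corollary~\ref{cor:m}\eqref{unique_char3} (or equivalently passing to inverses via $(\oplus_i r_i)^{-1}=\oplus_i r_{m+1-i}^{-1}$, which swaps the roles of $\cR_\cU$ and $\cR_\cU'$). Here the product of the tail sum $\oplus_{i=k}^m r_{1i}$ dominates the product $y_m'$ of the last summand $r_{1m}$, so $\supp(y_m')$ is contained in the support of every tail product. If $r_0\oplus r_1\in\overline{\cR}_0$ then the total product carries no species of $\cU$; reading this from the right forces the last summand's product to avoid $\cU$, giving $r_{1m}\in\cR_\cU\setminus\cR_\cU'$, and forces each tail $\oplus_{i=k}^m r_{1i}$ to have species of $\cU$ in its reactant (it lies in $\overline{\cR}_\cU$ since $r_{1m}\in\cR_\cU$ and reactants are controlled by the first summand of the tail) but not in its product (inherited from $s_m\in\overline{\cR}_0$), i.e.\ in $\overline{\cR}_\cU\setminus\overline{\cR}_\cU'$.

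\textbf{Main obstacle.}
The routine part is the monotonicity of supports, which is immediate from Corollary~\ref{cor:m}. The delicate point is proving that the \emph{intermediate} partial sums lie in $\overline{\cR}_\cU'\setminus\overline{\cR}_\cU$ (resp.\ $\overline{\cR}_\cU\setminus\overline{\cR}_\cU'$), rather than merely the endpoints: I must rule out the possibility that a species of $\cU$ is produced and then fully consumed \emph{strictly before} step $m$, which would make some intermediate $s_k$ already avoid $\cU$ in its product while the hypothesis only constrains $s_m$. I expect to resolve this by analyzing, coordinate-by-coordinate in $\cU$, how the $0\vee(\cdot)$ terms in Definition~\ref{add} propagate: since each $r_{1i}\in\cR_\cU$ reintroduces some $\cU$-species into the reactant at every step, the reactant of each $s_k$ ($k\ge1$) meets $\cU$, and the careful bookkeeping is to show the product cannot clear $\cU$ prematurely without the reactant of the next step pulling it back — which is exactly where the hypothesis $r_0\oplus r_1\notin\overline{\cR}_\cU$ (equivalently the terminal condition) does the work.
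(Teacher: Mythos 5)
Your plan follows the same route as the paper's proof: Proposition \ref{properties_sum}\eqref{properties_sum4} gives monotonicity of reactants along the partial sums and of products along the tail sums, and this settles everything except one step. Writing $s_k=r_0\oplus(\oplus_{i=1}^k r_{1i})=(z_k,z_k')$ and $r_{1k}=(y_k,y_k')$, your arguments for $r_0\in\cR_\cU'\setminus\cR_\cU$, for $s_k\notin\overline{\cR}_\cU$, and for the whole second assertion are correct; indeed the second assertion is the easy half, since the tail $\oplus_{i=k}^{m}r_{1i}$ lies in $\overline{\cR}_\cU$ because its \emph{first} summand $r_{1k}$ is in $\cR_\cU$ (you cite $r_{1m}$ here, a harmless slip), and it avoids $\overline{\cR}_\cU'$ because its product is dominated by that of $r_0\oplus r_1\in\overline{\cR}_0$.

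The gap is in the one step you yourself single out as delicate: showing $s_k\in\overline{\cR}_\cU'$ for $k\le m-1$, and neither of your two justifications works as stated. In the main sketch you argue the partial product must carry $\cU$-species because ``otherwise $s_m$ would already avoid $\cU$ in the product too early''; but the hypothesis of the first assertion is only $r_0\oplus r_1\notin\overline{\cR}_\cU$ --- the product of $s_m$ is completely unconstrained there and may well contain species of $\cU$, so there is no ``too early'' to contradict. In the obstacle paragraph you instead assert that ``the reactant of each $s_k$ ($k\ge 1$) meets $\cU$'' because each $r_{1i}\in\cR_\cU$ reintroduces $\cU$-species; this is false --- the reactant of $s_k=s_{k-1}\oplus r_{1k}$ equals $z_{k-1}+0\vee(y_k-z_{k-1}')$, which need not contain $\supp(y_k)$ --- and it flatly contradicts the conclusion being proved, namely $s_k\notin\overline{\cR}_\cU$. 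The correct argument, which the paper gives and which your final clause about the reactant ``pulling it back'' gestures at without pinning down, is this: since $\supp(z_{k-1})\cap\cU=\supp(z_k)\cap\cU=\emptyset$ (both by reactant-monotonicity against $z_m$), the identity $z_k=z_{k-1}+0\vee(y_k-z_{k-1}')$ forces $0\vee(y_k-z_{k-1}')$ to vanish on every $\cU$-coordinate, whence $\emptyset\neq\supp(y_k)\cap\cU\subseteq\supp(z_{k-1}')\cap\cU$, i.e.\ $s_{k-1}\in\overline{\cR}_\cU'$ for $k=1,\dots,m$. With that substitution your outline becomes the paper's proof.
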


\begin{proof}
Let  $(z_0,z_0')=r_0$ and $(z_k,z_k')=r_0\oplus (\oplus_{i=1}^k r_{1i})$, $k=1,\ldots,m$. It follows from Proposition \ref{properties_sum}\eqref{properties_sum4} that $z_0\le z_1\le\ldots\le z_m$. As $r_0\oplus r_1\not\in\overline{\cR}_{\cU}$, then $\supp (z_k)\cap \cU=\emptyset$ for $k=0,\dots, m$. This implies that $r_0=(z_0,z_0')\notin \cR_{\cU}$, and thus $r_0\in \cR_{\cU}'\setminus\cR_{\cU}$. On the other hand, for  $k=1,\dots, m$,  $r_{1k}=y_{k}\ce{->}y_{k}'\in \cR_{\cU}$ and by definition, $z_{k}=z_{k-1}+0\vee (y_{k}-z_{k-1}')$. 
 As $\supp (z_{k-1})\cap \cU=\supp (z_k)\cap \cU=\emptyset$, we therefore necessarily have 
 $$ \emptyset\neq \supp(y_k)\cap \cU\subseteq \supp(z_{k-1}')\cap \cU,$$
and hence $r_0\oplus (\oplus_{i=1}^k r_{1i})\in\overline{\cR}_{\cU}'\setminus \overline{\cR}_{\cU}$.

 For the second part of the lemma, we have
$r_0\oplus r_1=r_0\oplus (\oplus_{i=1}^{m-1}r_{1i})\oplus r_{1m}$. 
By Proposition \ref{properties_sum}\eqref{properties_sum4}, if $r_{1m}\in \cR_{\cU}'$, then $r_0\oplus r_1\in \overline{\cR}_{\cU}'$, which contradicts the assumption that $r_0\oplus r_1\in \overline{\cR}_0$. Thus, $r_{1m}\in\cR_\cU\setminus\cR_\cU'$. Let  $(\widetilde{z}_k,\widetilde{z}_k')=\oplus_{i=m-k+1}^m r_{1i}$ for all $k=1,\dots, m$, and let $(\widetilde{z}_{m+1},\widetilde{z}_{m+1}')=r_0\oplus (\oplus_{i=1}^m r_{1i})$. Using Proposition \ref{properties_sum}\eqref{properties_sum4} again, we get $\widetilde{z}_1'\le \widetilde{z}_2'\le\ldots\le \widetilde{z}_{m+1}'$. As $r_0\oplus r_1=(\widetilde{z}_{m+1},\widetilde{z}_{m+1}')\in \overline{\cR}_0$, we have $\supp(z_k')\cap \cU=\emptyset$, and thus $\oplus_{i=m-k+1}^m r_{1i}\notin \overline{\cR}_{\cU}'$ for all $k=1,\dots, m$. 
Finally, for any $k\in \{1,\dots,m\}$, as $r_{1k}\in \cR_{\cU}$, Proposition \ref{properties_sum}\eqref{properties_sum4} implies that $(\widetilde{z}_k,\widetilde{z}_k')=\oplus_{i=m-k+1}^m r_{1i}\in \overline{\cR}_{\cU}$. Therefore, $\oplus_{i=m-k+1}^m r_{1i}\in \overline{\cR}_{\cU}\setminus \overline{\cR}_{\cU}'$ for all $k=1,\dots, m$. It completes the proof.
\end{proof}

If $r\in\overline{\cR}$ is such that $x\in\N_0^n$ is active on $r$ and $\cU\cap \supp(x)=\emptyset$, then $r\not\in \overline{\cR}_{\cU}$ (cf. Corollary \ref{coroactive}). In particular, this applies to reactions $r$ that appear as sums of reactions $r=\oplus_{i=0}^m r_{i}$ with $r_0\in\cR_\cU'$ and $r_1,\ldots, r_m\in\cF$.

Keeping the interpretation of fast-slow dynamics in mind, let $\cF$ consist of the fast reactions and $\cR\setminus\cF$ of the slow reactions. If currently in a state $x\in\N_0^n$ with no molecules of the species in $\cU$, that is,  $\cU\cap \supp(x)=\emptyset$, and a reaction $r_0\in\cR_\cU'\setminus \cR_{\cU}$ occurs, producing one or more molecules of the species in $\cU$, then usually
a sequence of reactions takes place that degrades the molecules of the species $\cU$ again. Reactions in $\cR_\cU\setminus\cF$ have a low probability of occuring \cite{HW2021}.

We state some trivial cases of eliminable species.
\begin{enumerate}[(i)]
\setlength\itemsep{1em}
\item If $\cU\subseteq \cS$ and $\cR_\cU'\setminus \cR_{\cU}= \emptyset$, then $\cU$ is eliminable with respect to any $\cF\subseteq \cR_{\cU}$. In that case $\cR_{\cU,\cF}=\emptyset$ and $\cR_{\cU,\cF}^*=\cR_0$ (cf. Lemma \ref{lem:red1}).
\item If $\cU=\emptyset$, then $\cR_{\cU}=\cR_{\cU}'=\emptyset$, and $\cU$ is eliminable with respect to $\cF=\emptyset$,  and $\cR_0=\cR$, $\cR_{\cU,\cF}=\emptyset$ and thus $\cR_{\cU,\cF}^*=\cR$.
\item If $\cU=\cS$, then $\cR_0=\emptyset$, $\cR_{\cU}=\cR_{\cU}'=\cR$ and hence this is a special case of (i) with $\cR_{\cU,\cF}^*=\emptyset$.
\end{enumerate}

If $\cU$  is eliminable with respect to both $\cF_1\subseteq\cR_{\cU}$ and  $\cF_2\subseteq\cR_{\cU}$, then $\cU$ is not necessarily eliminable with respect to $\cF_1\cup \cF_2$. The same is the case if disjoint sets $\cU_1$ and  $\cU_2$ are eliminable with respect to $\cF_1$ and  $\cF_2$, respectively (potentially with empty intersection), then $\cU_1\cup\cU_2$ is not necessarily eliminable with respect to $\cF_1\cup\cF_2$. Here, it should at least be required that $\cU_2$ is eliminable with respect to $\cF_2\subseteq(\cR^*_{\cU_1,\cF_1})_{\cU_2}$, see Proposition \ref{prop:ny}. An example of this is also given below, where the two sets of eliminable species do not appear in the same reactions, hence the condition is trivially fulfilled.

\begin{proposition}
Let $\cR$ be an RN, and let $\cU=\cU_1\cup \cU_2\subseteq\cS$ with $\cU_1\cap \cU_2=\emptyset$. Suppose that $\cU_1$ in $\cR$ is eliminable with respect to $\cF_1$, that $\cU_2$ is eliminable in $\cR$ with respect to $\cF_2$, and that 
\[(\cR_{\cU_1}\cup \cR_{\cU_1}')\cap (\cR_{\cU_2}\cup \cR_{\cU_2}')=\emptyset.\]
Then, $\cU$ is eliminable in $\cR$ with respect to 
$\cF=\cF_1\cup \cF_2$.
\end{proposition}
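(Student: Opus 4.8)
The plan is to verify Definition \ref{def.red2} for the pair $\cU$, $\cF=\cF_1\cup\cF_2$ directly, exploiting that the non-interaction hypothesis decouples the $\cU_1$- and $\cU_2$-dynamics. I would begin with the structural bookkeeping. Since $\cU=\cU_1\cup\cU_2$ disjointly, $\cR_\cU=\cR_{\cU_1}\cup\cR_{\cU_2}$ and $\cR_\cU'=\cR_{\cU_1}'\cup\cR_{\cU_2}'$, and the hypothesis $(\cR_{\cU_1}\cup\cR_{\cU_1}')\cap(\cR_{\cU_2}\cup\cR_{\cU_2}')=\emptyset$ means no reaction of $\cR$ carries species from both $\cU_1$ and $\cU_2$; in particular $\cF_1,\cF_2$ are disjoint and $\cF\subseteq\cR_\cU$. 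I would use throughout that $\oplus$ acts coordinatewise (as in the proof of Proposition \ref{pro}), so that the projection $\pi_j$ of a reaction onto its $\cU_j$-coordinates (reactant and product) is a homomorphism, that a reaction lies in $\overline{\cR}_0$ exactly when $\pi_1$ and $\pi_2$ both send it to $(0,0)$, and that every element of $\cl(\cF_1)$ is free of $\cU_2$-species (and symmetrically for $\cl(\cF_2)$).

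Next I would fix $r_0\in\cR_\cU'$ and $r_1\in\cl(\cF)$ with $r_0\oplus r_1\notin\overline{\cR}_\cU$ and seek $r_2\in\cl(\cF)$ with $r_0\oplus r_1\oplus r_2\in\overline{\cR}_0$. Because $\cR_{\cU_1}'\cap\cR_{\cU_2}'=\emptyset$, the reaction $r_0$ lies in exactly one of $\cR_{\cU_1}'$, $\cR_{\cU_2}'$; by the symmetry of the hypotheses in the indices $1,2$ I would assume $r_0\in\cR_{\cU_1}'$, so that $\pi_2(r_0)=(0,0)$.

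The crux is to show that in fact $r_1\in\cl(\cF_1)$, i.e. that no $\cF_2$-reaction appears in a representation $r_1=\oplus_i s_i$. Applying $\pi_2$ and using $\pi_2(r_0)=(0,0)$ with the homomorphism property, the $\cU_2$-reactant of $r_0\oplus r_1$ equals that of $\oplus_{i:\,s_i\in\cF_2}\pi_2(s_i)$, since the $\cF_1$-summands vanish under $\pi_2$. If any $s_i\in\cF_2\subseteq\cR_{\cU_2}$ occurred, its $\cU_2$-reactant would be nonzero, and by iterating Proposition \ref{properties_sum}\eqref{properties_sum4} (the reactant of a sum dominates that of its first summand) the whole $\cU_2$-reactant would be nonzero, forcing $r_0\oplus r_1\in\overline{\cR}_{\cU_2}\subseteq\overline{\cR}_\cU$ and contradicting the hypothesis. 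Hence $r_1\in\cl(\cF_1)$.

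From here the argument should close quickly. Since $\overline{\cR}_{\cU_1}\subseteq\overline{\cR}_\cU$, we have $r_0\oplus r_1\notin\overline{\cR}_{\cU_1}$, so eliminability of $\cU_1$ with respect to $\cF_1$, applied to $r_0\in\cR_{\cU_1}'$ and $r_1\in\cl(\cF_1)$, yields $r_2\in\cl(\cF_1)\subseteq\cl(\cF)$ with $r_0\oplus r_1\oplus r_2$ free of $\cU_1$-species. As $r_0$, $r_1$, $r_2$ are each free of $\cU_2$-species, applying $\pi_2$ shows their sum is too; thus $r_0\oplus r_1\oplus r_2$ is free of all $\cU$-species, i.e. lies in $\overline{\cR}_0$, as required. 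The case $r_0\in\cR_{\cU_2}'$ is identical with the indices exchanged. I expect the only delicate point to be this middle step—showing that the non-interaction hypothesis forces $r_1\in\cl(\cF_1)$—after which the reduction to the already-established eliminability of $\cU_1$ is routine bookkeeping with the coordinatewise structure of $\oplus$.
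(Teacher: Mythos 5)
Your proof is correct, and its skeleton coincides with the paper's: without loss of generality take $r_0\in\cR_{\cU_1}'$, show that $r_1$ is in fact a sum of $\cF_1$-reactions only, invoke eliminability of $\cU_1$ with respect to $\cF_1$ to produce $r_2\in\cl(\cF_1)\subseteq\cl(\cF)$, and finally check that $r_0\oplus r_1\oplus r_2$ is also free of $\cU_2$-species. Where you genuinely differ is in how the crux, $r_1\in\cl(\cF_1)$, is established. The paper proceeds sequentially: it applies Lemma \ref{lem:red1} to place every prefix sum $r_0\oplus(\oplus_{i=1}^k r_{1i})$ in $\overline{\cR}_{\cU}'\setminus\overline{\cR}_{\cU}$, and then argues summand by summand that each $r_{1k}$ has no $\cU_2$-species in its reactant, hence lies in $\cF_1$. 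You instead observe that $\oplus$ acts coordinatewise, so the projection $\pi_2$ onto the $\cU_2$-coordinates (of reactant and product) is a monoid homomorphism; since $r_0$ and all $\cF_1$-summands are killed by $\pi_2$ under the disjointness hypothesis, $\pi_2(r_0\oplus r_1)$ is the ordered sum of the projections of the $\cF_2$-summands, and by iterating Proposition \ref{properties_sum}\eqref{properties_sum4} its reactant would be nonzero if any such summand existed, contradicting $r_0\oplus r_1\notin\overline{\cR}_{\cU}$. This one-shot argument avoids Lemma \ref{lem:red1}, needs no case distinction on whether $r_0\oplus r_1\in\overline{\cR}_{\cU}'$, and also streamlines the final step, where the paper separately asserts $\cl(\cF_1)\cap(\overline{\cR}_{\cU_2}\cup\overline{\cR}_{\cU_2}')=\emptyset$ --- exactly your homomorphism observation in disguise. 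The trade-off is that the paper's prefix-sum analysis produces finer structural information along the way (of the kind reused in Proposition \ref{prop:ny}), whereas your projection argument is shorter and more conceptual. One cosmetic caution: you refer to the hypothesis $(\cR_{\cU_1}\cup\cR_{\cU_1}')\cap(\cR_{\cU_2}\cup\cR_{\cU_2}')=\emptyset$ as a ``non-interaction'' hypothesis, but in this paper ``non-interacting'' is a different technical notion (introduced only after this proposition, and not assumed here); since you only ever use the disjointness itself, this is a terminology slip rather than a gap.
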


\begin{proof}
Let $r_0\oplus r_1\not\in\overline{\cR}_\cU$ with $r_0\in\cR_\cU'=\cR_{\cU_1}'\cup\cR_{\cU_2}'$, $r_1\in\cl(\cF)$.  If $r_0\oplus r_1\notin \overline{\cR}_\cU'$, then we are done.
Otherwise, suppose that $r_0\oplus r_1\in\overline{\cR}_\cU'$.  Without loss of generality, assume that $r_0\in\cR_{\cU_1}'$. Let $r_1=\oplus_{i=1}^mr_{1i}$ with $r_{1i}\in\cF$. As $r_0\oplus r_1\notin \overline{\cR}_{\cU}$, then by Lemma \ref{lem:red1},  we have  $r_0\in\cR_{\cU_1}'\setminus \cR_{\cU}$ and $r_0\oplus (\oplus_{i=1}^k r_{1i})\in\overline{\cR}_{\cU}'\setminus \overline{\cR}_{\cU}$ for all $k=1,\dots, m$. Note that $\cR_{\cU_1}'\cap \cR_{\cU_2}'=\emptyset$, hence   $r_0$ has no species of $\cU_2$ in the product. We claim that $r_{11}$ has no species of $\cU_2$ in the reactant. \black{If this is not the case, then $r_0\oplus r_{11}\in \overline{\cR}_{\cU_2}\subseteq  \overline{\cR}_{\cU}$, which contradicts the fact that $r_0\oplus r_{11}\in \overline{\cR}_{\cU}'\setminus \overline{\cR}_{\cU}$.} Thus, $r_{11}\in \cF_1$. 

Recall the assumption that $\cR_{\cU_1}\cap \cR_{\cU_2}'=\emptyset$. It follows that $r_{11}\in \cR_{\cU_1}\setminus \cR_{\cU_2}'$, and thus, by Proposition \ref{properties_sum}\eqref{properties_sum4}, $r_0\oplus r_{11}\in \overline{\cR}_{\cU_1}'\setminus\overline{\cR}_{\cU_2}'$. As a result, $r_{12}$ has no species of $\cU_2$ in the reactant as well. This implies that $r_{12}\in \cF_1$. Iteratively, we can show that $r_{1k}\in \cF_1$ for all $k=1,\dots, m$. In other words, $r_1=\oplus_{k=1}^mr_{1k}\in \cl(\cF_1)$. Since $\cU_1$ is eliminable with respect to $\cF_1$, there exists $r_2\in \cl(\cF_1)\subseteq \cl(\cF)$ such that $r_0\oplus r_1\oplus r_2\in \overline{\cR}\setminus(\overline{\cR}_{\cU_1}\cup\overline{\cR}_{\cU_1}')$. By assumption $(\cR_{\cU_1}\cup \cR_{\cU_1}')\cap (\cR_{\cU_2}\cup \cR_{\cU_2}')=\emptyset$, we have $r_0\notin \cR_{\cU_2}\cup \cR_{\cU_2}'$ and $\cl(\cF_1)\cap (\overline{\cR}_{\cU_2}\cup \overline{\cR}_{\cU_2}')=\emptyset$. Thus $r_0\oplus r_1\oplus r_2\notin (\overline{\cR}_{\cU_2}\cup \overline{\cR}_{\cU_2}')$, which yields that 
\[
r_0\oplus r_1\oplus r_2\in \overline{\cR}_0= \overline{\cR}\setminus ( \overline{\cR}_{\cU}\cup \overline{\cR}_{\cU}')= \overline{\cR}\setminus ( \overline{\cR}_{\cU_1}\cup \overline{\cR}_{\cU_1}'\cup\overline{\cR}_{\cU_2}\cup \overline{\cR}_{\cU_2}').
\]
This completes the proof of this lemma.
\end{proof}

We introduce some important classes of species that often appear in practice \cite{Variable_el,saez,saez2}. See also Example \ref{ex_2MMrev}.
 
\begin{definition}
Let $\cR$ be an RN and $\cU\subseteq \cS$.  Then,
\begin{enumerate}[(i)]
\setlength\itemsep{1em}
\item  $\cU$ consists of {\it non-interacting} species, if for any two species $S_i,S_j\in\cU$ and any reaction $y\ce{->}y'\in\cR$, the sum of the stoichiometric coefficients $y^i+y^j$ and $(y')^i+(y')^j$ in the reactant and the product, respectively, are at most one.
\item 
$\cU$ consists of {\it intermediate} species, if the species of $\cU$ are non-interacting and furthermore, for $S_i\in\cU$ and $y\ce{->}y'\in\cR$, whenever $y^i=1$, then $y=S_i$, and whenever  $(y')^i=1$, then $y'=S_i$.
\end{enumerate} 
\end{definition}

\begin{example}[Example \ref{ex_2MM} revisited]\label{ex_2MMrev}
Recall the reactions
$$E+A\ce{<=>} EA,\quad EA+P\ce{->}EQ\ce{->}E+Q.$$
The set $\cU=\{EQ\}$ consists of intermediate species and $\cU$ is eliminable with respect to $\cF=\{EQ \ce{->} E+Q\}$. The reduced RN  is $\cR_{\cU,\cF}^*=\{E+A\ce{<=>} EA, EA+P\ce{->}E+Q\}$.
Similarly, the set $\cU=\{EA, EQ\}$ consists of non-interacting species and $\cU$ is eliminable with respect to $\cF=\{EA\ce{->} E+A, EA+P\ce{->}EQ \ce{->} E+Q\}$. The reduced RN  is $\cR_{\cU,\cF}^*=\{ E+A+P\ce{->}E+Q\}$.
\end{example}

\begin{lemma} \label{lem:non-int-red}
Let $\cR$ be an RN and  $\cU\subseteq\cS$ a set of non-interacting species.
Furthermore, let $r_0\in\cR_\cU'$, $r_1=\oplus_{i=1}^m r_{1i},$ $r_{1i}\in\cR_\cU$, $i=1,\ldots,m$, such that $r_0\oplus r_1\notin \overline{\cR}_{\cU}$. Then,
\begin{enumerate}[(i)]
\setlength\itemsep{1em}
    \item \label{non-int-red1} $r_0\in\cR_\cU'\setminus\cR_\cU$, $r_{1i}\in\cR_\cU\cap\cR_\cU'$, $i=1,\ldots,m-1$. 
    \item \label{non-int-red2}
     Assume $r_0=y_0\ce{->} y_0'$ and $r_{1i}=y_i\ce{->} y_i'$, $i=1,\dots, m$. Then, $\supp(y_{i})\cap\cU=\supp(y_{i-1}')\cap \cU\neq \emptyset$ for $i=1,\ldots ,m$. 
    \item \label{non-int-red3} If $r_0\oplus r_1\in \overline{\cR}_0$, then $r_{1m}\in\cR_\cU\setminus\cR_\cU'$.
\end{enumerate}
Oppositely, let $r_0\in\cR_\cU'$, $r_1=\oplus_{i=1}^m r_{1i},$ $r_{1i}\in\cR_{\cU}$, $i=1,\ldots,m$. Suppose that   both \eqref{non-int-red1} and \eqref{non-int-red2} hold. Then,  $r_0\oplus r_1\notin \overline{\cR}_{\cU}$. If furthermore, $r_{1m}\in \cR_{\cU}\setminus\cR_{\cU}'$, then $r_0\oplus r_1\in \overline{\cR}_0$.
\end{lemma}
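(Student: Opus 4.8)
The plan is to analyze the partial sums $s_k=r_0\oplus(\oplus_{i=1}^k r_{1i})=(z_k,z_k')$ for $k=0,\dots,m$ (with $s_0=r_0$), and to follow the species of $\cU$ coordinatewise through the recursion defining $\oplus$. Two preliminary facts drive everything. First, non-interaction means that in the reactant and in the product of every reaction of $\cR$ the total stoichiometric coefficient of $\cU$-species is at most one; equivalently, for such a reactant or product $w$, the set $\supp(w)\cap\cU$ is empty or a single species of coefficient one. Second, applying Proposition \ref{properties_sum}\eqref{properties_sum4} to $s_{k-1}\oplus r_{1k}$ gives $z_{k-1}\le z_k$, so the reactants increase: $z_0\le z_1\le\dots\le z_m$.

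For the forward implication I would begin from $r_0\oplus r_1\notin\overline{\cR}_{\cU}$, i.e. $\supp(z_m)\cap\cU=\emptyset$; by monotonicity this propagates to $\supp(z_k)\cap\cU=\emptyset$ for all $k$, and taking $k=0$ yields $r_0\in\cR_\cU'\setminus\cR_\cU$. The core is an induction showing $(z_k')^{\,i}=(y_k')^{\,i}$ for every $S_i\in\cU$ and every $k$. In the step, $z_k^{\,i}=0$ together with $z_k^{\,i}=z_{k-1}^{\,i}+0\vee(y_k^{\,i}-(z_{k-1}')^{\,i})$ forces $y_k^{\,i}\le (z_{k-1}')^{\,i}$ for each $\cU$-species; applied to the unique $\cU$-species of $y_k$ (which exists as $r_{1k}\in\cR_\cU$) this places it in $\supp(z_{k-1}')\cap\cU$, which by the inductive identity equals $\supp(y_{k-1}')\cap\cU$. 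As $y_{k-1}'$ is the product of a genuine reaction, non-interaction bounds this set to a single species $\tau_{k-1}$ of coefficient one, whence $\supp(y_k)\cap\cU=\{\tau_{k-1}\}=\supp(y_{k-1}')\cap\cU$; this is \eqref{non-int-red2}. Substituting $(z_{k-1}')^{\,\tau_{k-1}}=(y_{k-1}')^{\,\tau_{k-1}}=1=y_k^{\,\tau_{k-1}}$ into $(z_k')^{\,i}=(y_k')^{\,i}+0\vee((z_{k-1}')^{\,i}-y_k^{\,i})$ then gives $(z_k')^{\,i}=(y_k')^{\,i}$ on $\cU$, closing the induction. Finally, \eqref{non-int-red2} at index $i+1$ shows $\supp(y_i')\cap\cU=\supp(y_{i+1})\cap\cU\neq\emptyset$ for $i\le m-1$, i.e. $r_{1i}\in\cR_\cU\cap\cR_\cU'$, which is \eqref{non-int-red1}; and under the extra hypothesis $\supp(z_m')\cap\cU=\emptyset$ of \eqref{non-int-red3}, the identity $(z_m')^{\,i}=(y_m')^{\,i}$ forces $\supp(y_m')\cap\cU=\emptyset$, so $r_{1m}\in\cR_\cU\setminus\cR_\cU'$.

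For the converse I would run the same induction in reverse: assuming \eqref{non-int-red1} and \eqref{non-int-red2}, prove by induction on $k$ that $\supp(z_k)\cap\cU=\emptyset$ and $(z_k')^{\,i}=(y_k')^{\,i}$ on $\cU$. Now \eqref{non-int-red2} directly supplies $\supp(y_k)\cap\cU=\supp(y_{k-1}')\cap\cU$, a single species $\tau_{k-1}$ by non-interaction, and the cancellation $y_k^{\,\tau_{k-1}}-(z_{k-1}')^{\,\tau_{k-1}}=1-1=0$ makes $z_k^{\,\tau_{k-1}}$ vanish while the remaining $\cU$-coordinates of $z_k$ are already zero; hence $r_0\oplus r_1=s_m\notin\overline{\cR}_\cU$. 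If in addition $r_{1m}\in\cR_\cU\setminus\cR_\cU'$, then $\supp(y_m')\cap\cU=\emptyset$, and the established identity $(z_m')^{\,i}=(y_m')^{\,i}$ gives $\supp(z_m')\cap\cU=\emptyset$ as well, so $s_m\in\overline{\cR}_0$.

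The step I expect to be the crux is precisely the claim that the product $z_k'$ of a partial sum never accumulates more than one $\cU$-species. This cannot be obtained by quoting non-interaction, because the $s_k$ lie in the closure $\overline{\cR}$ rather than in $\cR$, and the stoichiometric bound defining non-interaction need not survive summation a priori. It must instead be propagated inductively, via the exact identity $(z_k')^{\,i}=(y_k')^{\,i}$ and the precise cancellation $1-1=0$ at the single shared species $\tau_{k-1}$, with all other $\cU$-coordinates vanishing either because they are absent from $y_k'$ or because the $0\vee(\cdot)$ term is nonpositive. (A shorter route to \eqref{non-int-red1} tracks the scalar counts $N(w)=\sum_{S_i\in\cU}w^i$: the net-gain formula of Corollary \ref{cor:m} yields $N(z_k')=1+\sum_{i=1}^k(N(y_i')-1)$, and $0\le N(z_k')\le1$ together with the reactant bound $N(y_{k+1})\le N(z_k')$ forces $N(y_i')=1$ for $i\le m-1$; but the species-matching in \eqref{non-int-red2} still needs the support induction.) Careful bookkeeping of the index ranges—that $\supp(y_i')\cap\cU$ is a singleton exactly for $i\le m-1$, while $r_{1m}$ may or may not meet $\cR_\cU'$—is what cleanly separates \eqref{non-int-red1}–\eqref{non-int-red3} from their converses.
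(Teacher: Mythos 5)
Your proof is correct and follows essentially the same route as the paper's: both track the partial sums $r_0\oplus(\oplus_{i=1}^k r_{1i})$ inductively, use non-interaction to reduce each reactant/product intersection with $\cU$ to a singleton of coefficient one, and rest on the same cancellation identity that the partial sum's product agrees with the last reaction's product on the $\cU$-coordinates. The differences are only organizational: the paper obtains \eqref{non-int-red1} and \eqref{non-int-red3} by citing Lemma \ref{lem:red1} and runs the induction only for \eqref{non-int-red2} (at the level of supports rather than exact coefficients), and it omits the backward direction as straightforward, which you spell out explicitly.
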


 \begin{proof}
The backward direction of the lemma is straightforward, so we only need to prove the forward direction. Note that Lemma \ref{lem:red1} and \eqref{non-int-red2} imply \eqref{non-int-red1} and \eqref{non-int-red3}, hence we are left to prove \eqref{non-int-red2}.  Assume $m\geq 2$, as otherwise there is nothing to prove. Recall that $\cU$ is a set of non-interacting species. Since $r_k\in \cF\subseteq\cR_{\cU}$, $k=1,\dots,m$, then each reactant $y_k$ contains exactly one species in $\cU$ with stoichiometric coefficient one.  Let $(z_k,z_k')=r_0\oplus (\oplus_{i=1}^k r_i)$, $k=1,\dots, m$. By repeating the proof in Lemma \ref{lem:red1}, we find  that $\supp(y_1)\cap \cU=\supp(y_0')\cap \cU$ and $\supp(y_0'-y_1)\cap \cU=\emptyset$. Thus, $\supp(z_1')\cap\cU=\supp(y_1'+0\vee(y_0'-y_1))\cap \cU=\supp(y_1')\cap \cU$ is a singleton. Therefore, $\supp(y_1')\cap\cU=\supp(z_1')\cap\cU=\supp(y_2)\cap \cU$. The proof of this lemma can be completed by iteration.
\end{proof}

The conclusion of Lemma \ref{lem:non-int-red} is not true in {\it general}, see  Example \ref{exam4}.

\begin{proposition}\label{prop:ny}
Let $\cR$ be an RN and let $\cU=\cU_1\cup\cU_2\subseteq\cS$
be a set of non-interacting species such that $\cU_1\cap\cU_2=\emptyset$. Furthermore, assume $\cU_1$ is eliminable with respect to $\cF_1\subseteq\cR_{\cU_1}$ in $\cR$, and that $\cU_2$ is  eliminable  with respect to $\cF_2=(\cR^*_{\cU_1,\cF_1})_{\cU_2}$ in $\cR^*_{\cU_1,\cF_1}$. Then $\cU$ is eliminable with respect to $\cF=\cF_1\cup\cR_{\cU_2}$   in $\cR$.
\end{proposition}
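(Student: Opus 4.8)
The plan is to prove eliminability of $\cU$ with respect to $\cF = \cF_1 \cup \cR_{\cU_2}$ directly from Definition \ref{def.red2}, by reducing the problem to the two hypotheses we are given: eliminability of $\cU_1$ with respect to $\cF_1$ in $\cR$, and eliminability of $\cU_2$ with respect to $\cF_2 = (\cR^*_{\cU_1,\cF_1})_{\cU_2}$ in the reduced network $\cR^*_{\cU_1,\cF_1}$. So I start with an arbitrary $r_0 \in \cR_\cU'$ and $r_1 \in \cl(\cF)$ with $r_0 \oplus r_1 \notin \overline{\cR}_\cU$, and I must produce $r_2 \in \cl(\cF)$ with $r_0 \oplus r_1 \oplus r_2 \in \overline{\cR}_0$. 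As noted after Definition \ref{def.red2}, it suffices to treat the case $r_0 \oplus r_1 \in \overline{\cR}_\cU' \setminus \overline{\cR}_\cU$.

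The first key step is to split off the $\cU_1$-reductions. Writing $r_1 = \oplus_{i=1}^m r_{1i}$ with each $r_{1i} \in \cF = \cF_1 \cup \cR_{\cU_2}$, I want to show that in fact the whole sequence consists of $\cF_1$-reactions, i.e.\ $r_1 \in \cl(\cF_1)$, mirroring the argument in the proof of the previous proposition. The idea is that since $\cU$ is non-interacting and $r_0 \oplus r_1 \notin \overline{\cR}_\cU$, Lemma \ref{lem:non-int-red}\eqref{non-int-red2} forces each $r_{1i}$ to carry a single species of $\cU$ through the reactant chain, and the bookkeeping about which sub-species ($\cU_1$ versus $\cU_2$) is being carried should, combined with the definition of $\cF_2$ and the hypothesis that $\cU_1, \cU_2$ are disjoint non-interacting sets, confine the chain to $\cF_1$ — or at least decompose $r_1$ in a controlled way across the two reduction layers. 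Here I can invoke Lemma \ref{lem:red1} and Lemma \ref{lem:non-int-red} to track supports of reactants and products.

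The second key step is the two-stage application of the hypotheses. Having extracted the $\cF_1$-part, eliminability of $\cU_1$ gives an $r_2' \in \cl(\cF_1)$ with $r_0 \oplus r_1 \oplus r_2' \in \overline{\cR} \setminus (\overline{\cR}_{\cU_1} \cup \overline{\cR}_{\cU_1}')$, i.e.\ the result is a reaction free of $\cU_1$ and hence an element of (the closure associated with) $\cR^*_{\cU_1,\cF_1}$. I then feed this reaction into the second hypothesis: viewing it inside $\cR^*_{\cU_1,\cF_1}$ and using eliminability of $\cU_2$ with respect to $\cF_2 = (\cR^*_{\cU_1,\cF_1})_{\cU_2}$, I obtain a further $r_2'' \in \cl(\cF_2) \subseteq \cl(\cF)$ clearing the remaining $\cU_2$ species. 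Setting $r_2 = r_2' \oplus r_2''$ (or the appropriate combined sum) and using associativity of $\oplus$ together with the closure property of $\overline{\cR}_0$, I conclude $r_0 \oplus r_1 \oplus r_2 \in \overline{\cR}_0$, which is exactly the required condition.

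The main obstacle I expect is the interplay between the two reduction layers in the support-tracking of the first step: the reactions of $\cF_2$ live in $\cR^*_{\cU_1,\cF_1}$, not literally in $\cR$, so I must be careful that a reaction in $\cl(\cF) = \cl(\cF_1 \cup \cR_{\cU_2})$ can be reorganized (via associativity and the equivalence relation $\sim$) into a $\cU_1$-clearing part followed by a $\cU_2$-clearing part, rather than having the two kinds of species interleave uncontrollably. Establishing that this reorganization is legitimate — essentially that clearing $\cU_1$ first does not obstruct subsequent $\cU_2$-clearing and that the $\cF_2$-reactions are faithfully represented as sums in $\cl(\cF)$ — is where the non-interacting hypothesis and the precise definition $\cF_2 = (\cR^*_{\cU_1,\cF_1})_{\cU_2}$ must be used most carefully. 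Once that structural decomposition is secured, the conclusion follows by the two clean applications of the single-species eliminability hypotheses.
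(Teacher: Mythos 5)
You have the right two--stage strategy---it is in fact the same one the paper follows---but the proposal leaves a genuine gap exactly at the point you yourself call ``the main obstacle'', and the concrete first step you propose is false. Proving $r_1\in\cl(\cF_1)$ worked in the preceding proposition only because of its hypothesis $(\cR_{\cU_1}\cup \cR_{\cU_1}')\cap (\cR_{\cU_2}\cup \cR_{\cU_2}')=\emptyset$, which is absent here: with $\cF=\cF_1\cup\cR_{\cU_2}$, the sum $r_1=\oplus_{i=1}^m r_{1i}$ will in general interleave $\cF_1$-reactions with $\cR_{\cU_2}$-reactions (e.g.\ a chain $S_1\ce{->}U_2\ce{->}U_1\ce{->}S_2$ with $U_1\in\cU_1$, $U_2\in\cU_2$, where $U_2\ce{->}U_1\in\cR_{\cU_2}\setminus\cF_1$), and moreover $r_0$ need not lie in $\cR_{\cU_1}'$ at all, so $\cU_1$-eliminability cannot be applied to $r_0\oplus r_1$ as a whole. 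Your fallback---reorganize the sum via associativity and $\sim$ into a $\cU_1$-clearing part followed by a $\cU_2$-clearing part---also fails as stated: $\oplus$ is non-commutative (e.g.\ $(0,A)\oplus(A,B)=(0,B)$ while $(A,B)\oplus(0,A)=(A,A+B)$), and membership in $\overline{\cR}_{\cU}$ or $\overline{\cR}_0$ is a property of the element $r_0\oplus r_1$ itself, not of its $\sim$-class, so summands cannot be permuted.

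There is a second, related gap: eliminability of $\cU_2$ in $\cR^*_{\cU_1,\cF_1}$ is not a statement about arbitrary elements of $\cl(\cR^*_{\cU_1,\cF_1})$ having $\cU_2$-species in the product; by Definition \ref{def.red2} it applies only to elements presented in the form $\widehat{r}_0\oplus\widehat{r}_1$ with $\widehat{r}_0\in(\cR^*_{\cU_1,\cF_1})_{\cU_2}'$ and $\widehat{r}_1\in\cl(\cF_2)$. So after your first stage you cannot simply ``feed the resulting reaction into the second hypothesis''; you must exhibit such a presentation. This is precisely what the paper's proof constructs: keeping the original order, it partitions $r_0,r_{11},\dots,r_{1m}$ into consecutive blocks, each an $\cR_{\cU_2}$-reaction (or $r_0$) followed by a maximal run of $\cF_1$-reactions, uses $\cU_1$-eliminability to complete each block by an element of $\cl(\cF_1)$ so that every block-sum except the first lies in $\cF_2=(\cR^*_{\cU_1,\cF_1})_{\cU_2}$ while the first lies in $(\cR^*_{\cU_1,\cF_1})_{\cU_2}'\setminus(\cR^*_{\cU_1,\cF_1})_{\cU_2}$, and only then invokes $\cU_2$-eliminability inside the reduced network. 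The non-interacting hypothesis is what makes these blocks cohere (each block's reactant carries a single $\cU_2$-species across, via Lemma \ref{lem:non-int-red}), and it is also needed to prove the inclusion $\cl(\cF_2)\subseteq\cl(\cF)$, which you assert but do not establish. Without this block decomposition your proposal remains a plan rather than a proof.
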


\begin{proof}
We make use of the following notation: $\widetilde{\cR}=\cR^*_{\cU_1,\cF_1}$, $\widehat{\cR}=\cl(\widetilde{\cR})$, $\widehat{\cR}_0=\widehat{\cR}\setminus (\widehat{\cR}_{\cU_2}\cup \widehat{\cR}_{\cU_2}')$, and $\overline{\cR}_0=\overline{\cR}\setminus (\overline{\cR}_{\cU}\cup \overline{\cR}_{\cU}')$. 
 \black{ Then, $\widehat{\cR}$ is a closed subset of $\overline{\cR}$ such that for any $(y,y')\in \widehat{\cR}$, $\supp(y)\cap \cU_1=\supp(y')\cap \cU_1=\emptyset$. Thus, $\widehat{\cR}\subseteq \overline{\cR}\setminus(\overline{\cR}_{\cU_1}\cup \overline{\cR}_{\cU_1}')$. Furthermore, by definition $\widehat{\cR}_0$ consists of all $(y,y')\in\widehat{\cR}$ such that $\supp(y)\cap \cU_2=\supp(y')\cap \cU_2=\emptyset$, thus  $\widehat{\cR}_0\cap (\overline{\cR}_{\cU_2}\cup \overline{\cR}_{\cU_2}')=\emptyset$.} It follows that
\[
\widehat{\cR}_0\subseteq \overline{\cR}\setminus (\overline{\cR}_{\cU_1}\cup\overline{\cR}_{\cU_1}'\cup\overline{\cR}_{\cU_2}\cup\overline{\cR}_{\cU_2}')=\overline{\cR}_0.
\]
By definition, we need to verify that for any $r_0\in \cR_{\cU}'$, $r_1=\oplus_{i=1}^m r_{1i}$, $r_{1i}\in \cF$, with $r_0\oplus r_1\notin \overline{\cR}_{\cU}$, either $r_0\oplus r_1\in \overline{\cR}_0$, or there exists $r_2\in \cl(\cF)$ such that 
\begin{align}\label{r2}
r_0\oplus r_1\oplus r_2\in \overline{\cR}_0. 
\end{align}
Before proving this property, we show  that $\cl(\cF_1)\subseteq \cl(\cF)$ and $\cl(\cF_2)\subseteq \cl(\cF)$. The first inclusion is trivial. Now we prove the second one. Let $r\in \cF_2$. 
Then, by definition either $r\in \cF_2\cap [\cR\setminus (\cR_{\cU_1}\cup \cR_{\cU_1}')]$ or $r\in \cF_2\cap \cR_{\cU_1,\cF_1}$. In the former case, we have $r\in \cF_2\cap \cR_{\cU_2}\subseteq \cF$. Thus, it suffice to consider the second case, for which, we can write $r=\widehat{r}_0\oplus \widehat{r}_1\in \cR_{\cU_1,\cF_1}$ with $\widehat{r}_0\in \cR_{\cU_1}'$ and $\widehat{r}_1=\oplus_{k=1}^m\widehat{r}_{1k}$, $\widehat{r}_{1k}=\widehat{y}_{1k}\ce{->}\widehat{y}_{1k}'\in \cF_1\subseteq \cR_{\cU_1}$. Thus, $\supp(y_{1k})\cap \cU_1\neq\emptyset$, and by the non-interacting property, it holds that $\supp(y_k)\cap \cU_2=\emptyset$ for all $k=1,\dots, m$. 
Therefore, due to Proposition \ref{properties_sum}\eqref{properties_sum4}, we have  $\widehat{r}_1\in \overline{\cR}_{\cU_1}\setminus \overline{\cR}_{\cU_2}$. Hence, by Proposition \ref{properties_sum}\eqref{properties_sum4} again and because $\widehat{r}_0\oplus \widehat{r}_1\in \cF_2\subseteq \overline{\cR}_{\cU_2}$ has a non-interacting species in $\cU_2$ in the reactant, $\widehat{r}_0\in \cR_{\cU_2}\subseteq \cF$. Thus, $r\in \cl(\cF)$.

Suppose that $r_0\oplus r_1\in \overline{\cR}_{\cU}'\setminus \overline{\cR}_{\cU}$. Next, we show the existence of  an $r_2\in \cl(\cF)$, such that \eqref{r2} holds. Recall $r_1=\oplus_{i=1}^m r_{1i}$, $r_{1i}\in \cF$ such that $r_0\oplus r_1=r_0\oplus(\oplus_{i=1}^m r_{1i})\in \overline{\cR}_{\cU}'\setminus \overline{\cR}_{\cU}$. We claim that $r_{1m}\in \cR_{\cU}'$. Otherwise, assume $r_{1m}\in \cR_{\cU}\setminus\cR_{\cU}'$. By the opposite part of Lemma \ref{lem:non-int-red}, we have $r_0\oplus r_1\in \overline{\cR}_0$, which  contradicts the assumption that $r_0\oplus r_1\in \overline{\cR}_{\cU}'\setminus \overline{\cR}_{\cU}$. Thus, we have  $r_{1m}\in \cR_{\cU}\cap \cR_{\cU}'$. Suppose that $r_{1m}\in \cR_{\cU_1}\cap \cR_{\cU_1}'$. Let $j$ be the largest index  strictly smaller than $m$ such that $r_{1j}\notin \cR_{\cU_1}$ (with $r_{10}=r_0$). If $j=0$, then $r_{1i}\in (\cR_{\cU_1}\cap\cR_{\cU_1}')\cap \cF\subseteq \cF_1$, $i=1,\dots, m$ and, by  Lemma \ref{lem:red1}, $r_0\in \cR_{\cU_1}'\setminus \cR_{\cU_1}$. As $\cU_1$ is eliminable with respect to $\cF_1$, there exists $\widehat{r}_2\in \cl(\cF_1)$ such that $\widehat{r}=r_0\oplus r_1\oplus \widehat{r}_2\in \widetilde{\cR}$. If  $\widehat{r}\notin \widetilde{\cR}_{\cU_2}'$, then \eqref{r2} holds with  $r_2=\widehat{r}_2$. Otherwise,  $\widehat{r}\in \widetilde{\cR}_{\cU_2}'\setminus \widetilde{\cR}_{\cU_2}$. Since $\cU_2$ is eliminable in $\widetilde{\cR}$, with respect to $\cF_2$, there exists $\widehat{r}_3\in \cl(\cF_2)\subseteq \cl(\cF)$ such that $r_0\oplus r_1\oplus \widehat{r}_2\oplus \widehat{r}_3\in \widehat{\cR}_0$. Thus, we get \eqref{r2} with $r_2=\widehat{r}_2\oplus \widehat{r}_3$.

On the other hand, if $j>0$, then $r_{1j}\in \cR_{\cU_2}\cap \cR_{\cU_1}'$ and $r_{1(j+1)},\dots, r_{1m}\in \cF_1$. Thus, by  eliminability of $\cU_1$ in $\cR$ with respect to   $\cF_1$, 
there exists $\widehat{r}_2\in \cl(\cF_1)$ such that $r_{1j}\oplus \cdots \oplus r_{1m}\oplus \widehat{r}_2\in \widetilde{\cR}_{\cU_2}$. Let $j'$ be the largest index strictly smaller than $j$ such that $r_{1j'}\notin \cR_{\cU_2}$. If $j'=0$, then $r_0\in \cR_{\cU_2}'\setminus \cR_{\cU}\subseteq \widetilde{\cR}_{\cU_2}'\setminus \widetilde{\cR}_{\cU_2}$. Otherwise, $r_{1j'}\in \cR_{\cU_1}\cap\cR_{\cU_2}'$ and $r_{1(j'+1)},\dots, r_{1j}\in \cR_{\cU_2}\cap \cR_{\cU'_2}\subseteq \widetilde{\cR}_{\cU_2}=\cF_2$. Let $j''$ be the largest index strictly smaller than $j'$ such that $r_{1j''}\notin \cR_{\cU_1}$. By using the opposite part of Lemma \ref{lem:non-int-red}, we see that  $r_{1j''}\oplus \cdots \oplus r_{1j'}\in \widetilde{\cR}_{\cU_2}$. By repeating the same argument, we find that $r_0,r_{11},\cdots, r_{1(j''-1)}$ can be divided into ordered groups such that the sum of the reactions in each group, 
except the first group, is either in $(\cR_{\cU_1,\cF_1})_{\cU_2}$ or $\cR_{\cU_2}\cap\cR_{\cU_2}'$, which are both in $\widetilde{\cR}_{\cU_2}$, and the sum of the reactions in the first group is in $\widetilde{\cR}_{\cU_2}'\setminus\widetilde{\cR}_{\cU_2}$. Hence the existence of $r_2$ follows from  eliminability of $\cU_2$ with respect to $\cF_2=\widetilde{\cR}_{\cU_2}$ in $\widetilde{\cR}$.

The other cases when $r_{1m}$ is in $\cR_{\cU_1}\cap \cR_{\cU_2}'$, $\cR_{\cU_2}\cap \cR_{\cU_1}'$ or $ \cR_{\cU_2}\cap \cR_{\cU_2}'$ are essentially proved by the same means as above. The proof of the proposition is complete.
\end{proof}

It is not sufficient that $\cU_2$ is eliminable with respect to $\cR_{\cU_2}$ in $\cR$. For example, consider the RN $\cR=\{S_1\ce{->} U_1\ce{<=>}U_2\}$. Then $\cU_1=\{U_1\}$ is eliminable with respect to $\cR_{\cU_1}$, and $\cU_2=\{U_2\}$ is eliminable with respect to $\cR_{\cU_2}$. However,  $\cU_1\cup\cU_2$ is not eliminable with respect to $\cR_{\cU_1\cup\cU_2}$.

We further note that several approaches to reductions of RNs derived from a deterministic dynamical perspective, have been studied both in terms of slow-fast dynamics \cite{king-altman,feliu2019quasisteady} as well as in the context of steady-states \cite{plos-12-gunawardena, intermediates,Variable_el,saez,springer-14-pantea-gupta-rawlings-craciun} for intermediates and non-interacting species in general. In the case of intermediates, our reduced RN agrees with the one suggested in \cite{intermediates}. However, for non-interacting species the reduced RN we obtain differs from that of \cite{feliu2019quasisteady,saez}. This is a consequence of the discrete nature of the state space in our case compared to the continuous state space for deterministic reaction systems.

\section{Reversibility analysis for reduced RNs}\label{sec:red_reach}

Reversibility (weak reversibility, essentiality) is an important property for an  RN  and often imply strong properties on the dynamics, irrespectively whether the RN is modelled deterministically or stochastically \cite{anderson2,Cappelletti,Feinberg,anderson1,non-stand_1}.
Therefore, we are interested in finding criteria for a reduced RN to be reversible (weakly reversible, essential), provided the original RN is. However, in general, this appears to be a  challenging problem. Here, we provide  sufficient conditions for a reduced RN to be  (weakly) reversible under the assumption that the eliminable species are non-interacting species.

For a set $A\subseteq\N_0^n\times \N_0^n$, let $A^{-1}=\{r^{-1}|r\in A\}$.

\begin{theorem}\label{thm.rvs}
Let $\cR$ be an RN and $\cU\subseteq\cS$ a set  of non-interacting species. Assume $\cU$ is eliminable with respect to $\cF\subseteq \cR_\cU$, as in Definition \ref{def.red2}, and define the condition
$$(*)\qquad (\cR_{\cU}'\setminus \cR_{\cU})^{-1}= \cF\setminus \cR_{\cU}'\quad \text{and}\quad\cF\cap  \cR_{\cU}'\text{\,\, is essential.}$$
Then,
\begin{enumerate}[(i)]
\setlength\itemsep{1em}
\item \label{rvred0} If  $(\cR_\cU'\setminus \cR_\cU)\cup\cF$ is reversible then $(*)$ holds.
\item \label{rvred1} $\cR_{\cU,\cF}$ is reversible if $(*)$ holds. 
\item \label{rvred2} $\cR_{\cU,\cF}^*$ is (weakly) reversible if $\cR_0$ is (weakly) reversible and $(*)$ holds.
\item \label{rvred3}  $\cR_{\cU,\cF}^*$ is weakly reversible if there exists $\cF_0\subseteq \cl(\cR)$ such that $(\cR\setminus \cR_{\cU})\cup \cF_0$ is weakly reversible and $(*)$ holds.
\end{enumerate}
\end{theorem}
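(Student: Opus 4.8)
I would establish the four parts in order, each feeding the next.

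\emph{Part \eqref{rvred0}.} I would argue set-theoretically, using that $r=(y,y')$ has a species of $\cU$ in its reactant exactly when $r^{-1}$ has one in its product, and conversely. Write $A=(\cR_\cU'\setminus\cR_\cU)\cup\cF$. Here $\cR_\cU'\setminus\cR_\cU$ consists of reactions producing but not consuming $\cU$, while $\cF\subseteq\cR_\cU$ consists of reactions consuming $\cU$; the two pieces are disjoint and are interchanged by the involution $r\mapsto r^{-1}$. If $r\in\cR_\cU'\setminus\cR_\cU$ then reversibility of $A$ gives $r^{-1}\in A$; as $r^{-1}$ consumes $\cU$ it cannot lie in $\cR_\cU'\setminus\cR_\cU$, so $r^{-1}\in\cF$, and as $r^{-1}$ does not produce $\cU$, in fact $r^{-1}\in\cF\setminus\cR_\cU'$. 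The reverse inclusion is symmetric, proving $(\cR_\cU'\setminus\cR_\cU)^{-1}=\cF\setminus\cR_\cU'$. For the essentiality clause I would prove the stronger statement that $\cF\cap\cR_\cU'$ is itself reversible: any $r$ there consumes and produces $\cU$, hence so does $r^{-1}$, forcing $r^{-1}\in\cF\cap\cR_\cU'$; since $\left(\oplus_{i=1}^m r_i\right)^{-1}=\oplus_{i=1}^m r_{m+1-i}^{-1}$, a reversible set has reversible closure, hence is essential.

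\emph{Part \eqref{rvred1}.} The engine is the structural Lemma \ref{lem:non-int-red}. A generic $r\in\cR_{\cU,\cF}$ lies in $\overline{\cR}_0$ and can be written $r=r_0\oplus r_1$ with $r_0\in\cR_\cU'$ and $r_1=\oplus_{i=1}^m r_{1i}\in\cl(\cF)$, $r_{1i}\in\cR_\cU$. Since $r\in\overline{\cR}_0$, in particular $r\notin\overline{\cR}_\cU$, so Lemma \ref{lem:non-int-red} yields $r_0\in\cR_\cU'\setminus\cR_\cU$, $r_{1i}\in\cR_\cU\cap\cR_\cU'$ for $i<m$, and $r_{1m}\in\cR_\cU\setminus\cR_\cU'$. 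I then reverse: $r^{-1}=r_{1m}^{-1}\oplus r_{1(m-1)}^{-1}\oplus\cdots\oplus r_{11}^{-1}\oplus r_0^{-1}$. Condition $(*)$ places the factors: from $r_{1m}\in\cF\setminus\cR_\cU'$ I get $r_{1m}^{-1}\in\cR_\cU'\setminus\cR_\cU$ (a legitimate entry factor lying in $\cR$); from $r_0\in\cR_\cU'\setminus\cR_\cU$ I get $r_0^{-1}\in\cF\setminus\cR_\cU'\subseteq\cl(\cF)$; and essentiality of $\cF\cap\cR_\cU'$ gives $r_{1i}^{-1}\in\cl(\cF\cap\cR_\cU')\subseteq\cl(\cF)$ for $i<m$. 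Setting $r_0'=r_{1m}^{-1}\in\cR_\cU'$ and $r_1'=r_{1(m-1)}^{-1}\oplus\cdots\oplus r_0^{-1}\in\cl(\cF)$, I have $r^{-1}=r_0'\oplus r_1'$ with all factors in $\overline{\cR}$, hence $r^{-1}\in\overline{\cR}$. Since $r\in\overline{\cR}_0$ has no species of $\cU$ in reactant or product, neither does $r^{-1}$, so $r^{-1}\in\overline{\cR}_0$; and $r\not\sim(0,0)$ forces $r^{-1}\not\sim(0,0)$. Thus $r^{-1}\in\cR_{\cU,\cF}$, proving reversibility.

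\emph{Part \eqref{rvred2}.} This follows from \eqref{rvred1} together with the fact that (weak) reversibility passes to a union whenever each piece is (weakly) reversible and each witnessing inverse or path stays within its own piece. For $\cR_{\cU,\cF}^*=\cR_0\cup\cR_{\cU,\cF}$, a reaction in $\cR_0$ has its inverse (reversible case) or its witnessing path (weakly reversible case) inside $\cR_0$, while a reaction in $\cR_{\cU,\cF}$ has its inverse inside $\cR_{\cU,\cF}$ by \eqref{rvred1}, a single-reaction path sufficing as $\cR_{\cU,\cF}$ is reversible. In either case the witness lies in $\cR_{\cU,\cF}^*$.

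\emph{Part \eqref{rvred3}.} This is the hardest part and rests on a path-rewriting argument; recall that a path $r_1,\dots,r_p$ with consecutive matching sums to (first reactant, last product) by Proposition \ref{properties_sum}\eqref{properties_sum4}. Reactions in $\cR_{\cU,\cF}$ are handled by the reversibility of \eqref{rvred1}, so it remains to produce, for each $r=(y,y')\in\cR_0$, a directed path inside $\cR_{\cU,\cF}^*$ from $y'$ to $y$. Weak reversibility of $(\cR\setminus\cR_\cU)\cup\cF_0$ supplies such a path $\pi$, but built from entry reactions in $\cR_\cU'\setminus\cR_\cU$ and auxiliary reactions in $\cF_0\subseteq\cl(\cR)$. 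The plan is to cut $\pi$ at the $\cU$-free complexes it visits into maximal segments, each either staying $\cU$-free or forming a \emph{$\cU$-excursion} that begins by producing $\cU$ and returns $\cU$-free. The net of each excursion lies in $\cl\big((\cR\setminus\cR_\cU)\cup\cF_0\big)\subseteq\overline{\cR}$ and is $\cU$-free at both ends, hence in $\overline{\cR}_0$; using eliminability of $\cU$, condition $(*)$, and Lemma \ref{lem:non-int-red}, I would re-express each excursion as a directed path through reduced reactions of $\cR_{\cU,\cF}$ (and $\cR_0$) with the same endpoints, and then concatenate. The main obstacle is exactly this re-expression: the excursions are assembled from the composite reactions $\cF_0$ rather than from $\cl(\cF)$, so one must show the $\cU$-degrading part can be rerouted through $\cl(\cF)$ (this is where eliminability and the first clause of $(*)$, matching entries to terminating fast reactions, are decisive) and, more delicately, that the resulting $\oplus$-sum can be realised as a genuine directed walk with exact complex matching rather than merely a sum. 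I expect this path-realisation to be the technical heart of the argument.
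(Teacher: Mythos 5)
Your proofs of \eqref{rvred0}, \eqref{rvred1} and \eqref{rvred2} are correct and follow essentially the same route as the paper's: for \eqref{rvred0}, the decomposition of $(\cR_\cU'\setminus\cR_\cU)\cup\cF$ into the three disjoint pieces $\cR_\cU'\setminus\cR_\cU$, $\cF\setminus\cR_\cU'$, $\cF\cap\cR_\cU'$ together with the support argument (including the observation that $\cF\cap\cR_\cU'$ is in fact reversible, which is stronger than essential); for \eqref{rvred1}, Lemma \ref{lem:non-int-red} to place $r_0\in\cR_\cU'\setminus\cR_\cU$, $r_{1i}\in\cF\cap\cR_\cU'$ for $i<m$ and $r_{1m}\in\cF\setminus\cR_\cU'$, followed by inversion and relocation of the factors via $(*)$. (Your $r^{-1}=r_0'\oplus r_1'$ is the correct order; the paper writes $r_1'\oplus r_0'$ at that spot, which is a slip.) Part \eqref{rvred2} is a direct consequence of \eqref{rvred1} in both treatments.

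Part \eqref{rvred3}, however, has a genuine gap: you set up the same decomposition as the paper---take the witnessing sequence for $r=(y,y')\in\cR_0$ inside $(\cR\setminus\cR_\cU)\cup\cF_0$ and cut it at its $\cU$-free complexes into $\cR_0$-segments and $\cU$-excursions---but you stop precisely at the step that constitutes the proof, deferring the conversion of excursions into reduced reactions as ``the technical heart''. The paper's execution of that step is short: writing the path as $y'=y_0\ce{->} y_1,\dots,y_m\ce{->} y$, if $i$ is the first index with $\supp(y_i)\cap\cU\neq\emptyset$ and $j$ is the first index $>i$ at which the complex is again $\cU$-free, then $y_{i-1}\ce{->} y_i\in\cR_\cU'\setminus\cR_\cU$, the internal reactions $y_i\ce{->} y_{i+1},\dots,y_{j-1}\ce{->} y_j$ all consume $\cU$ and hence belong to $\cF_0$, their sum $(y_i,y_j)$ lies in $\cl(\cF)$, and the converse direction of Lemma \ref{lem:non-int-red} then shows that $(y_{i-1},y_j)=(y_{i-1}\ce{->} y_i)\oplus(y_i,y_j)$ is either in $\cR_{\cU,\cF}$ or $\sim(0,0)$ (in which case $y_j=y_{i-1}$ and the excursion is simply deleted). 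Iterating along the path produces a sequence of reactions of $\cR_{\cU,\cF}^*$ with matching complexes summing to $(y',y)$, which is weak reversibility of $r$.

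In particular, the obstacle you single out as most delicate---realising the resulting $\oplus$-sum as a genuine directed walk with exact complex matching---does not actually arise: each excursion is cut at complexes visited by the original path, so by Proposition \ref{properties_sum}\eqref{properties_sum4} its sum has reactant exactly $y_{i-1}$ and product exactly $y_j$, and consecutive pieces of the rewritten sequence match automatically. Your other concern, that the excursions are assembled from $\cF_0$ rather than from $\cl(\cF)$, is a fair one: the paper passes from membership in $\cF_0$ to membership of the excursion sum in $\cl(\cF)$ without further argument, in effect reading the hypothesis as tying $\cF_0$ to $\cl(\cF)$. But identifying this difficulty is not the same as resolving it, and without the excursion-to-reduced-reaction conversion your part \eqref{rvred3} remains a plan rather than a proof.
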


\begin{proof}
 \eqref{rvred0} Firstly, note that $(\cR_\cU'\setminus \cR_\cU)\cup\cF$ can be decomposed into three disjoint sets $\cR_{\cU}'\setminus \cR_{\cU}$, $\cF\setminus \cR_\cU'\subseteq \cR_{\cU}\setminus\cR_{\cU}'$ and $\cF\cap\cR_\cU'\subseteq\cR_{\cU}\cap\cR_{\cU}'$. \black{Since for any $(y,y')\in \cR_{\cU}'\setminus \cR_{\cU}$, $\supp(y)\cap \cU=\emptyset$ and $\supp(y')\cap \cU\neq \emptyset$, it follows that $(y',y)\notin \cR_{\cU}'\setminus \cR_{\cU}$. For the same reason, $(y',y)\notin \cF\cap\cR_\cU'$, where $\cF\cap\cR_{\cU}'\subseteq \cR_{\cU}\cap \cR_{\cU}'$. Thus we have,}
 \begin{align}\label{equ_its0}
 (\cR_{\cU}'\setminus \cR_{\cU})^{-1}\cap (\cR_{\cU}'\setminus \cR_{\cU})=(\cR_{\cU}'\setminus \cR_{\cU})^{-1}\cap(\cF\cap\cR_\cU')=\emptyset.
 \end{align}
 \black{By   reversibility of $(\cR_\cU'\setminus \cR_\cU)\cup\cF$, $(\cR_{\cU}'\setminus \cR_{\cU})^{-1}\subseteq (\cR_\cU'\setminus \cR_\cU)\cup\cF=(\cR_{\cU}'\setminus \cR_{\cU})\cup (\cF\setminus \cR_\cU')\cup(\cF\cap\cR_\cU')$. Combining this fact with \eqref{equ_its0}, we have} $(\cR_{\cU}'\setminus \cR_{\cU})^{-1}\subseteq \cF\setminus \cR_\cU'$. Similarly, it holds that $(\cF\setminus \cR_\cU')^{-1}\subseteq\cR_{\cU}'\setminus \cR_{\cU}$, \black{which, together with $(\cR_{\cU}'\setminus \cR_{\cU})^{-1}\subseteq \cF\setminus \cR_\cU'$,  implies $(\cR_{\cU}'\setminus \cR_{\cU})^{-1}= \cF\setminus \cR_{\cU}'$. For the same reason, we can show that $(\cF\cap\cR_\cU')^{-1}\subseteq \cF\cap\cR_\cU'$ holds. Hence $\cF\cap\cR_\cU'$ is essential}. In other words, $(*)$ is true  and the proof is complete.

 \eqref{rvred1} Let $r_0\oplus r_1\in \cR_{\cU,\cF}$, where $r_0,r_1$ are as in Definition \ref{def.red2}, Eqn. \eqref{eq:ruf}. Furthermore, there exists $r_{11},\dots, r_{1m}\in \cF$, such that $r_1=\oplus_{i=1}^mr_{1i}$. By Lemma \ref{lem:non-int-red},  $r_0\in \cR_{\cU}'\setminus \cR_{\cU}$, $r_{1m}\in (\cR_{\cU}\setminus \cR_{\cU}')\cap \cF=\cF\setminus \cR_{\cU}'$ and $\{r_{11},\dots,r_{1(m-1)}\}\subseteq \cF\cap \cR_{\cU}'$, assuming $m\ge 2$. Therefore, under condition $(*)$, we know that $r_0^{-1}\in \cF\cap (\cR_{\cU}\setminus \cR_{\cU}')$, $r_0' \black{:=}   r_{1m}^{-1}\in \cR_{\cU}'\setminus \cR_{\cU}$, and 
\[
\left(\oplus_{i=1}^{m-1}r_{1i}\right)^{-1}\in \ \cl(\cF\cap  \cR_{\cU}')\subseteq \cl(\cF).
\]
Therefore, $r_1'\black{:=}(\oplus_{i=1}^{m-1}r_{1i})^{-1}\oplus r_0^{-1}\in \cl(\cF)$ and thus $(r_0\oplus r_1)^{-1}=r_1'\oplus r_0'\in \cR_{\cU,\cF}$. This proves property \eqref{rvred1}.

 \eqref{rvred2} It  is a direct consequence of \eqref{rvred1} and the definition of $\cR_{\cU,\cF}^*$.

\eqref{rvred3}  It suffices to show that every $r=y\ce{->} y'\in \cR_0$ is weakly reversible in  $\cR_{\cU,\cF}^*$. Note that  $\cR_0\subseteq \cR\setminus \cR_{\cU}\subseteq (\cR\setminus \cR_{\cU}) \cup \cF_0$.  Thus, by assumption, there exist reactions 
$y'\ce{->} y_1,y_1\ce{->} y_2,\dots,$ $ y_m\ce{->} y\in (\cR\setminus \cR_{\cU})\cup \cF_0$. If for $k=1,\ldots,m$, $\supp(y_k)\cap \cU=\emptyset$, then $r$ is weakly reversible in $\cR_0$ and thus in  $\cR_{\cU,\cF}^*$. Otherwise, let $i=\min\{k|\supp(y_k)\cap \cU\neq \emptyset)\}$. Then 
\[
\{y'\ce{->} y_1,y_1\ce{->} y_2,\dots, y_{i-2}\ce{->} y_{i-1}\}\subseteq \cR_0\subseteq \cR_{\cU,\cF}^*,
\]
and $y_{i-1}\ce{->} y_i\in \cR_{\cU}'\setminus \cR_{\cU}$ (with $y_0=y'$). Let $j=\min \{k>i|\ \supp(y_k)\cap \cU= \emptyset)\}$. Then, 
\[
 \{y_{i}\ce{->} y_{i+1},\dots, y_{j-1}\ce{->} y_j\}\subseteq \cF_0.
\]
Therefore, $(y_i, y_j)=\oplus_{\ell=i}^{j}(y_{\ell-1}\ce{->} y_{\ell})\in \cl(\cF)$, which implies  either $(y_{i-1},y_j)=y_{i-1}\ce{->} y_i\oplus (y_i, y_j)\in \cR_{\cU,\cF}$ or $\sim (0,0)$, see Lemma \ref{lem:non-int-red}. Repeating this process, we can find a sequence of reactions $r_1',\dots, r_p'$ in the reduced RN $\cR_{\cU,\cF}^*$ (after removing elements equivalent to $(0,0)$) such that the product of $r_k'$ coincides with the reactant of $r_{k+1}'$ for  $k=1,\dots, p-1$, and $\oplus_{k=1}^p r_k'=y'\ce{->} y$. The proof of property \eqref{rvred3} is complete.
\end{proof}

We present some examples that show the limitations of Theorem \ref{thm.rvs}.

\begin{example}\label{ex5.2}
Consider the RN 
\[\cR=\{S_1\ce{->} U_1, U_1\ce{->} S_2, S_2\ce{->} S_1\}\]
with $\cU=\{U_1\}$. Let $\cF=\{U_1\ce{->} S_2\}$. Then, the reduced network $\cR_{\cU,\cF}^*=\{S_1\ce{<=>} S_2\}$ is reversible. However, \begin{enumerate}[(i)]
\setlength\itemsep{1em}
\item $\cR_0=\{S_2\ce{->}S_3+S_4\}$ is not reversible,
\item $(\cR_{\cU}'\setminus \cR_{\cU})^{-1}=\{U_1\ce{->} S_1\}\neq \cF\setminus \cR_{\cU}'=\{U_1\ce{->}S_2\}$.
\end{enumerate}
\end{example}

\begin{example}\label{ex5.3}
Concerning Theorem \eqref{thm.rvs}\eqref{rvred3}, consider the RN 
\[\cR=\{S_1+S_2 \ce{->} S_3+S_4, S_3\ce{->} U_1, S_4+U_1\ce{->} S_1+U_2, U_2\ce{->} S_2\}\]
with $\cU=\{U_1,U_2\}$. Let $\cF=\{S_4+U_1\ce{->} S_1+U_2, U_2\ce{->} S_2\}$. Then, the reduced network $\cR_{\cU,\cF}^*=\{S_1+S_2\ce{<=>} S_3+S_4\}$ is reversible. However, 
\begin{enumerate}[(i)]
\setlength\itemsep{1em}
\item $\cR_0=\{S_1+S_2\ce{->}S_3+S_4\}$ is not reversible.
\item $(\cR_{\cU}'\setminus \cR_{\cU})^{-1}=\{U_1\ce{->} S_3\}\neq \cF\cap (\cR_{\cU}\setminus \cR_{\cU}')=\{U_2\ce{->}S_2\}$.
\item There does not exist a subset $\cF_0\subseteq \cl(\cF)$ such that $(\cR\setminus \cR_{\cU})\cup\cF_0$ is essential, because $U_1\ce{->} S_3\in (\cR\setminus \cR_{\cU})^{-1}$ cannot be represented as a sum of reactions in $(\cR\setminus \cR_{\cU})\cup\cl(\cF)$.
\end{enumerate}
\end{example}

Therefore, Example \ref{ex5.2} and Example \ref{ex5.3} imply that the conditions provided in Theorem \ref{thm.rvs} are not necessary conditions for (weakly) reversibility of the reduced RN. The next example shows that  weak reversibility of $(\cR_{\cU}'\setminus \cR_{\cU})\cup \cF$ in the case of non-interacting species does not ensure weak reversibility of the reduced network, 
 implying reversibility in Theorem \ref{thm.rvs}\eqref{rvred0} cannot be replaced by weak reversibility and assumption $(*)$ cannot be removed in Theorem \ref{thm.rvs}\eqref{rvred3}.

\begin{example}
Consider the RN
\[
\cR=\{S_1\ce{->} U_1\ce{->} S_2\ce{->} U_2\ce{->} S_1, S_3+U_2\ce{<=>} S_4\},
\]
with $\cU=\{U_1,U_2\}$, $\cF=\cR_\cU$. 
Then, $(\cR\setminus \cR_{\cU})\cup \cF =(\cR_{\cU}'\setminus \cR_{\cU})\cup \cF=\cR$ is weakly reversible, but $\cR_{\cU,\cF}^*=\{S_1\ce{<=>} S_2, S_2+S_3\ce{->} S_4\ce{->} S_3+S_1\}$ is not weakly reversible. 
\end{example}

The example  below shows that Theorem \ref{thm.rvs} is not true beyond non-interacting species.

\begin{example}\label{exam4}
Consider the RN given by
\[\cR=\{S_1\ce{<=>} U_1+U_2, S_2\ce{<=>} U_1, S_3\ce{<=>} U_2\}\]
with $\cU=\{U_1,U_2\}$, and let $\cF=\cR_{\cU}$. Then,  $(\cR\setminus \cR_{\cU})\cup \cF=\cR$ is reversible, $(\cR_{\cU}'\setminus \cR_{\cU})^{-1}= \cF\cap (\cR_{\cU}\setminus \cR_{\cU}')=\cF$ and $\cF \cap \cR_{\cU}'=\cR_0=\emptyset$. In particular \eqref{rvred0} - \eqref{rvred3} with $\cF_0=\cF$ of Theorem \ref{thm.rvs} are all fulfilled, but $\cR_{\cU,\cF}^*=\{S_1\ce{->} S_2+S_3\}$ is not weakly reversible.
\end{example}

The last theorem of this section concerns reachability of the original and reduced RNs.

\begin{theorem}\label{reard}
Let $\cR$ be an RN and assume $\cU\subseteq\cS$ is eliminable with respect to 
$\cF\subseteq \cR_\cU$, as in Definition \ref{def.red2}. Let $x,x'\in\N_0^n$.
\begin{enumerate}[(i)]
\setlength\itemsep{1em}
\item  If $x$\label{reard1} leads to $x'$ via $\cR_{\cU,\cF}^*$, then $x$ leads to $x'$.
\item \label{reard2} Reversely, suppose that  $\cU$ consists of intermediate species and $\cF=\cR_{\cU}$. Assume $(\supp(x)\cup \supp(x'))\cap \cU=\emptyset$. Then if $x$ leads to $x'$ via $\cR$, then $x$ leads also to $x'$ via $\cR_{\cU,\cF}^*$.
\end{enumerate}
\end{theorem}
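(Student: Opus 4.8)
For part \eqref{reard1} the plan is to show that the reduced network sits inside the closure of the original one and then invoke Lemma \ref{leads} twice. Concretely, $\cR_0\subseteq\cR\subseteq\cl(\cR)$, while every element of $\cR_{\cU,\cF}$ has the form $r_0\oplus r_1$ with $r_0\in\cR_\cU'\subseteq\cR$ and $r_1\in\cl(\cF)\subseteq\cl(\cR_\cU)\subseteq\cl(\cR)$; since $\cl(\cR)$ is closed under $\oplus$, such a sum lies in $\cl(\cR)$. Hence $\cR_{\cU,\cF}^*\subseteq\cl(\cR)$, and because $\cl(\cR)$ is a closed set containing $(0,0)$, the defining minimality of the closure gives $\cl(\cR_{\cU,\cF}^*)\subseteq\cl(\cR)$. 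If $x$ leads to $x'$ via $\cR_{\cU,\cF}^*$, Lemma \ref{leads} produces $(y,y')\in\cl(\cR_{\cU,\cF}^*)\subseteq\cl(\cR)$ with $x\geq y$ and $x'=x+y'-y$, and applying Lemma \ref{leads} in the reverse direction yields that $x$ leads to $x'$ via $\cR$.

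For part \eqref{reard2} I would start from an active ordered sequence $\rho_1,\dots,\rho_N\in\cR$ witnessing $x\to x'$, with intermediate states $x=z_0,z_1,\dots,z_N=x'$, so that $z_{k-1}\geq\operatorname{reactant}(\rho_k)$ for every $k$. Using $\cF=\cR_\cU$ and the intermediate property, each $\rho_k$ falls into one of four types: a background reaction in $\cR_0$; a \emph{birth} in $\cR_\cU'\setminus\cR_\cU$, necessarily of the form $y\ce{->} S_i$ because an intermediate appears alone in a product; a \emph{death} in $\cR_\cU\setminus\cR_\cU'$, of the form $S_i\ce{->} y'$; or a \emph{transfer} in $\cR_\cU\cap\cR_\cU'$, which by the intermediate property is $S_i\ce{->} S_j$ and therefore involves \emph{no} non-intermediate species. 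Since $(\supp(x)\cup\supp(x'))\cap\cU=\emptyset$, the total intermediate count $\sum_{i\in\cU}z_k^i$ starts and ends at $0$ and changes by $+1,0,-1$ at a birth, a transfer or background reaction, and a death, respectively; in particular every intermediate molecule that is born is later consumed within the sequence.

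The plan is then to reorganize the sequence so that each intermediate molecule is followed from its birth, through the transfers acting on it, to its death, and to collect this entire life into a contiguous block. Such a block is precisely a sum $r_0\oplus r_1$ with $r_0\in\cR_\cU'\setminus\cR_\cU$ a birth and $r_1=\oplus(\text{transfers},\text{death})\in\cl(\cR_\cU)=\cl(\cF)$ whose summands satisfy the chain and support-matching conditions \eqref{non-int-red1}--\eqref{non-int-red2} of Lemma \ref{lem:non-int-red}; by the opposite part of that lemma the block lies in $\overline{\cR}_0$, hence is a reaction of $\cR_{\cU,\cF}$ (or is equivalent to $(0,0)$, in which case it is a self-loop that may be deleted). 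Background reactions form singleton blocks in $\cR_0$. Once the reordered sequence is partitioned into such contiguous blocks, Corollary \ref{coroactive} (via Corollary \ref{cor:m}) shows that the sequence of block-sums is active on $x$, because the state reached after each block in the grouped dynamics equals the corresponding $z_k$ and thus dominates the reactant of the next block-sum; deleting the blocks equivalent to $(0,0)$ yields an active sequence in $\cR_{\cU,\cF}^*$ from $x$ to $x'$.

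The main obstacle is the reordering that makes each intermediate life contiguous while preserving activeness. The feature that makes it feasible is that transfer reactions $S_i\ce{->} S_j$ touch only species of $\cU$, so they are inert with respect to all non-intermediate counts: the lives couple to the rest of the dynamics only through births and deaths, whose non-intermediate parts are already in place, and through the intermediate counts themselves. Because intermediate molecules of a given species are interchangeable and non-interacting (at most one appears in any reactant or product), one can assign the molecule consumed by each transfer or death by a last-in-first-out rule, so that the lifetime intervals of distinct intermediate molecules are nested; reactions belonging to different lives then commute without violating any $z_k\geq\operatorname{reactant}$ constraint, which is exactly what permits pulling each life together. I would implement this by induction on $N$, at each step extracting the innermost completed life (the first death together with the birth and transfers feeding its molecule) as a contiguous block, grouping it, and applying the induction hypothesis to the shortened sequence.
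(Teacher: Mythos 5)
Your proposal is correct, and its skeleton matches the paper's: part \eqref{reard1} by observing $\cR_{\cU,\cF}^*\subseteq\cl(\cR)$ and applying Lemma \ref{leads}, and part \eqref{reard2} by reordering the witnessing sequence so that each intermediate molecule's birth, transfers and death become contiguous, then recognizing each such block as an element of $\cR_{\cU,\cF}$ via the opposite part of Lemma \ref{lem:non-int-red} (with background reactions as singleton blocks in $\cR_0$). Where you genuinely diverge is in how the reordering is justified. The paper never tracks activeness of the reordered sequence at all: by Lemma \ref{leads} it suffices to exhibit an element of $\cl(\cR_{\cU,\cF}^*)$ that is $\leq (x,x')$ and $\sim(x,x')$, so the paper works purely in the $\oplus$-monoid and proves the single inequality \eqref{preard1} — moving the reaction that consumes $u_1$ directly behind the reaction that produces it can only decrease the sum in the partial order while preserving the equivalence class (Theorem \ref{cmgp}) — and iterates. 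You instead stay in the dynamical picture (states $z_k$, activeness at every prefix, Corollary \ref{coroactive}) and justify commutation by a LIFO molecule-assignment. That route does work, but the commutation claim you flag as the main obstacle is exactly where the remaining content lies: to complete it one must check that when a reaction belonging to a \emph{different} life needs a molecule of the same intermediate species as the life being moved, the count of that species before the move is at least two (the other life's own assigned molecule plus the moved one), so activeness survives; this uses crucially that reactants of transfers and deaths are bare intermediates $S_i$ — the same feature whose failure explains the counterexample after the theorem for merely non-interacting species. The trade-off is clear: the paper's order-theoretic inequality makes the argument short and bookkeeping-free, while your version is closer to the stochastic dynamics but requires this extra counting lemma (and care that grouping and deleting blocks $\sim(0,0)$ preserves activeness) to be fully rigorous.
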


\begin{proof}
\eqref{reard1} It follows directly from the definition of the reduced RN. 

\eqref{reard2} Suppose $x$ leads to $x'$ in $\cR$ and $(\supp(x)\cup \supp(x'))\cap \cU=\emptyset$. Then by Lemma \ref{leads} there are reactions $r_1\dots, r_m\in\cR$ (possibly with repetitions)  such that $\oplus_{i=1}^m r_i\leq (x,x')$ and $\oplus_{i=1}^m r_k\sim (x,x')$. Without loss of generality, assume $\oplus_{i=1}^m r_i= (x,x')$. 
\black{If this is not the case, then we proceed  with $(z,z')=\oplus_{i=1}^mr_i$, rather than $(x,x')$, and show that $(z,z')\in \cl(\cR_{\cU,\cF}^*)$. This subsequently implies that $x$ leads to $x'$ via $\cR_{\cU,\cF}^*$ as $\oplus_{i=1}^m r_k\sim (x,x')$. }

If $r_1,\dots, r_m\in \cR_0$, then $\oplus_{i=1}^m r_i\in \cl(\cR^*_{\cU,\cF})$, and we are done. Otherwise, since $\supp(x)\cap \cU=\emptyset$, by Lemma \ref{lem:non-int-red},  the reaction in $\{r_1,\dots,r_m\}\cap (\cR_{\cU}\cup \cR_{\cU}')$ with the smallest index belongs to  $\cR_{\cU}'\setminus \cR_{\cU}$. Without loss of generality, assume this reaction is $r_1=x_1\ce{->} u_1$, where $u_1\in\cU$ ($\cU$ consists of intermediate species).  \black{Otherwise, if $r_k$ is the first one, then $r_1,\dots, r_{k-1}\in \cR_0\subseteq \cR_{\cU,\cF}^*$, and we might define $r_1'=r_k, r_2'=r_{k+1}, \dots, r_{m-k+1}'=r_m$. Proceeding with the same argument as below, one can show that  $\oplus_{i=1}^{m-k+1} r_i'\in \cl(\cR_{\cU,\cF}^*)$, and thus $r_1\oplus \cdots \oplus r_{k-1}\oplus r_1'\oplus \cdots \oplus r_{m-k+1}' \in \cl(\cR_{\cU,\cF}^*)$ as well. Hence, we take $k=1$.}

Since $\supp(x')\cap \cU=\emptyset$, then there exists  $k\in \{2,\dots, m\}$, such that $u_1$ is the reactant of $r_k$, but not that of $r_2,\dots, r_{k-1}$. Let $r_{2:k-1}=(x_{2:k-1},x_{2:k-1}')=\oplus_{i=2}^{k-1} r_i$. We claim that 
\begin{align}\label{preard1}
r_1\oplus r_k\oplus r_{2:k-1}\leq \oplus_{i=1}^k r_i\ \mathrm{and}\ r_1\oplus r_k\oplus r_{2:k-1}\sim \oplus_{i=1}^k r_i.
\end{align}
The equivalence in \eqref{preard1} is a consequence of Theorem \ref{cmgp}. It suffices to show the inequality. Let $r_k=u_1\ce{->} x_2$, then $r_1\oplus r_k=(x_1,x_2)$
and thus
\[
r_1\oplus r_k\oplus r_{2:k-1}=\big(x_1+0\vee (x_{2:k-1}-x_2), x_{2:k-1}'+0\vee (x_2-x_{2:k-1})\big).
\]
On the other hand, by the choice of  $r_1$ and $r_k$, we have
\begin{align*}
\oplus_{i=1}^k r_i=r_1\oplus r_{2:k+1}\oplus r_k=&(x_1+x_{2:k-1}, u_1+x_{2:k-1}')\oplus (u_1, x_2)\\
=&(x_1+x_{2:k-1}, x_{2:k-1}'+x_2).
\end{align*}
This proves conclusion \eqref{preard1}. Note that $r_k=u_1\ce{->}x_2$ implies that either $x_2=u_2\in \cU$ or $\supp(x_2)\cap \cU=\emptyset$. Thus,  the procedure can be repeated to obtain $r_{\sigma(1)},\dots, r_{\sigma(m)}$, where $\sigma$ is a permutation of $\{1,\dots, m\}$, such that
\begin{align}\label{preard2}
\oplus_{i=1}^m r_{\sigma(i)}&\leq \oplus_{i=1}^m r_{i}=(x,x'),\quad \oplus_{i=1}^m r_{\sigma(i)}\sim(x,x'),
\end{align}
\black{which is implied by the fact that $\cU$ consists of intermediate species.} Moreover, there exist $0=k_0< k_1< \dots < k_j< k_{j+1}= m$, such that for each $i=0,\dots, j$, either $r_{\sigma(k_i+1)},\dots, r_{\sigma(k_{i+1})}\in\cR_0$, or $r_{\sigma(k_i+1)}\in \cR_{\cU}'\setminus \cR_{\cU}$, $r_{\sigma(k_i+1)},\dots, r_{\sigma(k_{i+1}-1)}\in \cR_{\cU}\cap \cR_{\cU}'$ and $r_{\sigma(k_{i+1})}\in \cR_{\cU}\setminus\cR_\cU'$ with $r_{\sigma(k_i+1)}\oplus \ldots \oplus r_{\sigma(k_{i+1})}\in \overline{\cR}_0$. Therefore, $r_{\sigma(k_i+1)}\oplus\dots\oplus r_{\sigma(k_{i+1})}\in \cl(\cR_{\cU,\cF}^*)$ for all $i=1,\dots, j$. This yields  $\oplus_{i=1}^m r_{\sigma(i)}\in \cl(\cR_{\cU,\cF}^*)$ as well. Combining \eqref{preard2} and Lemma \ref{leads}, it follows that $x$ leads to $x'$ via $\cR_{\cU,\cF}^*$. The proof is complete.
\end{proof}

Theorem \ref{reard}\eqref{reard2} does not hold in general, not even for non-interacting species. Consider the following counterexample,
\[
\cR=\{S_1\ce{->} S_2+U\ce{->} S_3, S_2\ce{->} S_4, S_4+U\ce{->} S_5\}
\]
with $\cU=\{U\}$ and $\cF=\cR_{\cU}$. Then, 
\[
\cR_{\cU,\cF}^*=\{ S_2\ce{->} S_4,S_1\ce{->} S_3,S_1+S_4\ce{->} S_2+S_5\}.
\]
Note that $(S_1,S_5)=(S_1\ce{->} S_2+U)\oplus  (S_2\ce{->} S_4)\oplus (S_4+U\ce{->} S_5)$. Thus $S_1$ leads to $S_5$ via $\cR$, but not via $\cR_{\cU,\cF}^*$.

\section{Discussion and conclusion}

 We introduced and analysed the properties of a  sum operation on chemical reactions. Thereby, we  connect and characterise structural properties of RNs, such as reachability, (weakly) reversibility, and being essential via the closure of the sum operation. This extends previous characterisations \cite{Cappelletti,jmb-14-pauleve-craciun-koeppl,Mreach} and connects such properties to the geometry of the closure $\cl(\cR)$ in the product space $\N_0^n\times \N_0^n$. In another direction, we defined reductions of RNs by elimination of species from an RN by adding reactions. Those reductions originate from connections to the slow-fast limits of stochastic RNs \cite{CW16}. Furthermore, we studied the conservation of (weakly) reversibility, when reachability of the original and the reduced network coincide in some sense.

 As the discrete dynamics of Petri Nets and  vector addition systems correspond directly to dynamics of RNs \cite{Cook2009}, the developed theory pertains to those areas as well. Correspondingly, problems and questions from theoretical computer science relate to the notions we have introduced. As an example, an undecidable problem relating to Section \ref{sec.appl} asks whether two RNs given by their reaction sets $\cR_1,\cR_2$ with initial values $x_1,x_2$, respectively, have the same reachability sets, i.e. whether $\cR_1(x_1)=\cR_2(x_2)$ \cite{HACK}. 
 Another example is the decidable reachability problem that asks whether given an RN and two states $x_1,x_2$, we can reach $x_2$ from $x_1$  \cite{Cook2009,Mreach}.

Furthermore, the closure $\cl(\cR)$ of an RN  has only sometimes the structure of a semi-linear set. This is not surprising as the set of reachable states of an RN directly relates to the closure  $\cl(\cR)$ of $\cR$, see Section \ref{sec.appl}. Reachability sets can be highly complex and are not necessarily semi-linear \cite{HOPCROFT1979135,normal_petri}. Nonetheless, it might be interesting to characterise and study the structure of RNs $\cR$ for which $\cl(\cR)$ is semi-linear.

Overall we hope that the sum calculus on reactions we have introduced will find further applications, possibly even in areas which  a priori are not directly linked to our areas of research.

 \bibliographystyle{plain}

 \bibliography{references} 
 
\end{document}